\def\rr{{\mathbb R}}
\def\rn{{{\rr}^n}}
\def\cn{{\mathbb N}}
\def\supp{{\mathop\mathrm{\,supp\,}}}
\def\loc{{\mathop\mathrm{\,loc\,}}}
\def\BMO{{\mathrm{\,BMO\,}}}
\def\R{{\mathop\mathbb{R}}}
\def\la{\langle}
\def\ra{\rangle}
\newtheorem{thm}{Theorem}[section]
\newtheorem{lem}[thm]{Lemma}
\newtheorem{prop}[thm]{Proposition}
\numberwithin{equation}{section}
\begin{document}
\arraycolsep=1pt
\author{Renjin Jiang, Kangwei Li \& Jie Xiao}
\title{{\bf \Large Flow with $A_\infty(\mathbb R)$ density and transport equation in $\mathrm{BMO}(\mathbb R)$}
 \footnotetext{\hspace{-0.35cm} 2010 {\it Mathematics
Subject Classification}. Primary 34A30; Secondary 42A99; 34A12;
\endgraf{
{\it Key words and phrases: flow; $A_\infty(\mathbb R)$; $\mathrm{BMO}(\mathbb R)$; Zygmund condition}
\endgraf}}
\date{}}
\maketitle

\begin{center}
\begin{minipage}{11.5cm}\small
{\noindent{\bf Abstract}. We show that, if $b\in L^1(0,T;L^1_{\mathrm{loc}}(\mathbb{R}))$ has 
spatial derivative in the John-Nirenberg space $\mathrm{BMO}(\mathbb{R})$, then it generalizes a unique flow $\phi(t,\cdot)$ which has an $A_\infty(\mathbb R)$ density for each time $t\in [0,T]$.
Our condition on the map $b$ is optimal and we also get a sharp quantitative estimate for the density.
As a natural application we establish a well-posedness for the Cauchy problem of the transport equation in $\mathrm{BMO}(\mathbb R)$.
}\end{minipage}
\end{center}
\vspace{0.2cm}

\section{\large Statement of main results}\label{s1}

Given an integer $n\ge 1$, a real $T\ge t>0$ and an evolutionary self-map $b(t,\cdot)$ of $\mathbb R^n$ with $$b\in L^1(0,T;L^1_\loc(\R^n)),$$ consider the flow
$$
\phi(t,x)= x+\int_0^t b(r,\phi(r,x))\,dr.
$$
We are motivated by the composition and transportation problems in BMO space to answer the question:
\medskip

\boxed{\text{What condition is needed on a vector field such that it generalizes a flow $\phi$ with $A_\infty$ density?}}

\medskip
On $\R^n$, $n\ge 2$, the question has a satisfactory solution by Reimann \cite{R2} via the following $(Q)$-condition
$$
\sup_{(x,y,z)\in\R^n\times\R^n\times\R^n,\ |y|=|z|>0}
\left|\frac{\la y,b(x+y)-b(x)\ra}{|y|^2}-\frac{\la z,b(x+z)-b(x)\ra}{|z|^2}\right|<\infty\leqno(Q)
$$
which is equivalent to the anti-conformal part
$$
S_Ab=\dfrac{1}{2}(Db + Db^{T})-\dfrac{\mathrm{div}\,b}{n}\, I_{n\times n}$$
is bounded - moreover (cf. \cite{R2}) -
$$
S_Ab\in L^\infty(\R^n)\Rightarrow Db\in\BMO(\R^n).
$$
More precisely, \cite{R2} shows if $b$ satisfies $(Q)$ then it generalizes a unique flow $\phi(t,x)$,
which at each time $t$ is a quasi-conformal mapping and so the Jacobian $J_\phi$ of $\phi$ is of $A_\infty(\R^n)$ (cf. \cite{BHS}) where
 $$0\le w\in A_\infty(\mathbb R^n)\Leftrightarrow
 [w]_{A_\infty(\R^n)}=\sup_{\text{cubes}\, I\subset\R^n} \left(\frac{1}{|I|}\int_I w\,dx\right)\exp\left(-\frac {1}{|I|}\int_I\log w\,dx\right)<\infty.
 $$
 However, less known is the situation on $\mathbb R$. Note that the $1$-dimensional $(Q)$-condition coincides with the Zygmund condition for a constant $C>0$:
$$
|b(x+y)+b(x-y)-2b(x)|\le C|y|\ \ \forall\ \ (x,y)\in\mathbb R\times\mathbb R.\leqno(Z)
$$
Reimann \cite{R2} showed that for functions satisfying $(Q)$ the induced flows are
quasi-symmetric mappings - unfortunately - quasi-symmetric mappings are not necessarily  absolutely continuous in $\R$ and a function satisfying $(Z)$ needs not be absolutely continuous (cf. \cite{BA56,R2} and \cite{DLN}). In view of this, some more restrictions on $b$ seem to be necessary for the generalized flow to have an $A_\infty$ density. Observe that the notion $S_Ab=0$ in $\mathbb R$ does not carry any information.

In this paper, we show that if $b'$ is of $\BMO(\mathbb R)$ then $b$ generalizes a (unique) flow with $A_\infty(\R)$ densities. To see this clearly, recall that
$$
f\in \BMO(\mathbb R^n)\Leftrightarrow \|f\|_{\BMO(\mathbb R^n)}=\sup_{\text{cubes}\, I\subset\R^n}|I|^{-1}\int_{I}|f-f_I|\,dx<\infty,
$$
where
$$
f_I=|I|^{-1}{\int_I f(x)\,dx}
$$ denotes the integral average of $f$ over $I$ whose Lebesgue measure is written as $|I|$.
Since all constant functions have zero $\BMO(\R^n)$-norm, and any constant does effect the flow,
we make a modification on $\BMO(\R^n)$ functions $f$ as
$$\|f\|_\ast=\|f\|_{\BMO(\R^n)}+\int_{B(0,1)}|f|\,dx,$$
where $B(0,1)$ is the unit ball of $\R^n$.
Obviously,  $$f\in \BMO(\R^n)\Leftrightarrow\|f\|_\ast<\infty,$$
however, $\|f\|_\ast$ is not comparable to $\|f\|_{\BMO(\R^n)}$.
In what follows,
$$\frac{\partial}{\partial x}b(t,x)\in L^1(0,T;\mathrm{BMO}(\mathbb R))
$$
stands for
$$\int_0^T\Bigg\|\frac{\partial}{\partial x}b(t,x)\Bigg\|_\ast\,dt<\infty.$$


Our first main result reads as follows.
\begin{thm}\label{main}
Let \begin{equation}\label{e11}b(t,x):\,[0,T]\times\rr\mapsto \rr\ \text{be in}\ L^1(0,T;L^{1}_\loc(\R))\ \ \text{with}\ \
\frac{\,\partial b(t,x)}{\,\partial x}\in L^1(0,T;\mathrm{BMO}(\mathbb R)).
\end{equation}
Then there exists a unique flow $\phi(t,x)$ satisfying
$$
\begin{cases}
\dfrac{\partial}{\partial t}\, \phi(t, x)=b(t,\phi(t,x))\ &\forall\ (t,x)\in [0,T]\times\mathbb R;\\
\phi(0,x)=x\ \ &\forall\ x\in\mathbb R.
\end{cases}
$$
Moreover, for each $t\in [0,T]$,
$$
\left|\dfrac{\partial}{\partial x}\phi(t,\cdot)\right|
$$
is an $A_\infty(\R)$-weight, and there exist constants $C_1,c>0$ such that
\begin{equation}\label{bmo-flow-1}
\left\| \log\Big|\dfrac{\partial}{\partial x}\phi(t,\cdot)\Big|\right\|_{\BMO(\mathbb R)}
\le \frac{\int_{0}^t C_1\left\|\dfrac{\partial}{\partial x}b(s,\cdot)\right\|_{\BMO(\mathbb R)} \,ds}{ \exp\left(-c\int_{0}^t\left\|\dfrac{\partial}{\partial x}b(s,\cdot)\right\|_{\BMO(\mathbb R)}\,ds\right)}.
\end{equation}
\end{thm}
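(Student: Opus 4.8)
The plan is to build the flow by a standard approximation-and-compactness scheme, with the crucial quantitative input coming from a one-dimensional analogue of Reimann's estimates phrased directly in terms of $\BMO(\mathbb R)$ rather than the Zygmund class. First I would regularize $b$ in space by mollification, say $b_\epsilon(t,\cdot)=b(t,\cdot)\ast\rho_\epsilon$, so that $\partial_x b_\epsilon(t,\cdot)$ is smooth and bounded for a.e. $t$, with $\|\partial_x b_\epsilon(t,\cdot)\|_{\BMO(\mathbb R)}\le\|\partial_x b(t,\cdot)\|_{\BMO(\mathbb R)}$ (mollification does not increase the $\BMO$ norm). For each $\epsilon$ the classical Cauchy--Lipschitz theory (in the Carath\'eodory form, since $b_\epsilon\in L^1(0,T;C^1\cap W^{1,\infty}_{\loc})$) produces a unique $C^1$-in-$x$ flow $\phi_\epsilon(t,x)$, and differentiating the integral equation in $x$ gives
\begin{equation*}
\partial_x\phi_\epsilon(t,x)=\exp\left(\int_0^t \partial_x b_\epsilon\bigl(s,\phi_\epsilon(s,x)\bigr)\,ds\right),
\end{equation*}
so that $\log\partial_x\phi_\epsilon(t,x)=\int_0^t \partial_x b_\epsilon(s,\phi_\epsilon(s,x))\,ds$ is automatically positive-sign-free and we only need to control its $\BMO(\mathbb R)$ norm.

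The heart of the matter is the a priori bound \eqref{bmo-flow-1} for these approximants, uniformly in $\epsilon$. The key step is to show that composing a $\BMO(\mathbb R)$ function with the flow map $\phi_\epsilon(s,\cdot)$ — which is itself only as regular as the bound we are trying to prove — distorts the $\BMO$ norm by at most a factor $\exp(c\int_0^s\|\partial_x b_\epsilon\|_{\BMO}\,dr)$. Concretely, I expect a lemma of the form: if $\psi$ is an increasing homeomorphism of $\mathbb R$ with $\log\psi'\in\BMO(\mathbb R)$, then for any $g\in\BMO(\mathbb R)$ one has $\|g\circ\psi\|_{\BMO(\mathbb R)}\le C\exp(c\|\log\psi'\|_{\BMO})\,\|g\|_{\BMO(\mathbb R)}$; this is essentially the statement that an $A_\infty$ change of variables acts boundedly on $\BMO$, with the operator norm controlled by the $A_\infty$ (equivalently the $\BMO$ of the log-derivative) constant. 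Granting this, write $u(t)=\|\log\partial_x\phi_\epsilon(t,\cdot)\|_{\BMO(\mathbb R)}$. From the formula for $\log\partial_x\phi_\epsilon$ and Minkowski's integral inequality,
\begin{equation*}
u(t)\le \int_0^t \bigl\|\partial_x b_\epsilon(s,\cdot)\circ\phi_\epsilon(s,\cdot)\bigr\|_{\BMO(\mathbb R)}\,ds
\le C\int_0^t e^{c\,u(s)}\,\bigl\|\partial_x b_\epsilon(s,\cdot)\bigr\|_{\BMO(\mathbb R)}\,ds.
\end{equation*}
Setting $\beta(t)=C\int_0^t e^{c\,u(s)}\|\partial_x b_\epsilon(s,\cdot)\|_{\BMO}\,ds$ so that $u\le\beta$, $\beta$ is absolutely continuous with $\beta'(s)=Ce^{c\,u(s)}\|\partial_x b_\epsilon(s,\cdot)\|_{\BMO}\le Ce^{c\,\beta(s)}\|\partial_x b_\epsilon(s,\cdot)\|_{\BMO}$; the differential inequality $(e^{-c\beta})' \ge -cC\|\partial_x b_\epsilon(s,\cdot)\|_{\BMO}$ integrates to $e^{-c\beta(t)}\ge 1-cC\int_0^t\|\partial_x b_\epsilon\|_{\BMO}$, i.e. (after adjusting constants) exactly the right-hand side of \eqref{bmo-flow-1}. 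This is a Gr\"onwall-type argument, but one must be a little careful because the bound can blow up when $\int_0^t\|\partial_x b_\epsilon(s,\cdot)\|_{\BMO}\,ds$ is large; the statement as given only asserts finiteness, and one reads off from the integrated inequality that $u(t)$ stays finite precisely on the time interval where the denominator in \eqref{bmo-flow-1} stays positive, which by a continuation argument is all of $[0,T]$ once we know the bound is a closed a priori estimate.

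With the uniform bound in hand, I would pass to the limit $\epsilon\to 0$: since $\log\partial_x\phi_\epsilon(t,\cdot)$ is bounded in $\BMO(\mathbb R)$ (and, after the $\|\cdot\|_\ast$ normalization controls the constant, in $\BMO$ plus a local $L^1$ bound), the $\phi_\epsilon$ are locally equicontinuous and the reverse-H\"older property of $A_\infty$ weights is stable under the weak-$\ast$ limit of the derivatives; a diagonal/Arzel\`a--Ascoli argument on compact sets gives a subsequence converging locally uniformly to a flow $\phi$ satisfying the integral equation (here one uses $b\in L^1(0,T;L^1_{\loc})$ and the fact that the images $\phi_\epsilon(s,K)$ stay in a fixed compact set, uniformly, by the $\BMO$ bound), and lower semicontinuity of the $\BMO$ norm transfers \eqref{bmo-flow-1} to $\phi$. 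Uniqueness is then a separate, and I expect easier, argument: given two such flows one subtracts the integral equations and uses that $b(t,\cdot)$ is, after the $\BMO$-derivative hypothesis, log-Lipschitz-type (a $\BMO$ derivative gives an Osgood modulus of continuity for $b$, since $\int|b(x)-b(y)|$ is controlled via the John--Nirenberg inequality), so an Osgood uniqueness criterion applies. The main obstacle, and the place I would spend the most care, is the composition lemma $\|g\circ\psi\|_{\BMO}\le C e^{c\|\log\psi'\|_{\BMO}}\|g\|_{\BMO}$ with the \emph{sharp exponential} dependence on the constant — this is where the one-dimensional structure and the precise interplay between $A_\infty$, $\BMO$, and the John--Nirenberg inequality must be exploited, and getting the exponent to match the claimed $c$ (rather than some non-explicit constant) is the delicate quantitative point that makes estimate \eqref{bmo-flow-1} sharp.
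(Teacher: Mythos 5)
Your scheme hinges on a composition lemma of the form $\|g\circ\psi\|_{\BMO(\mathbb R)}\le Ce^{c\|\log\psi'\|_{\BMO(\mathbb R)}}\|g\|_{\BMO(\mathbb R)}$, which you state as something you ``expect'' but never prove, and this is precisely where the argument breaks. Knowing $\|\log\psi'\|_{\BMO(\mathbb R)}$ does \emph{not} control $[\psi']_{A_\infty(\mathbb R)}$: by John--Nirenberg one only gets $e^v\in A_\infty(\mathbb R)$ with a quantitative constant when $\|v\|_{\BMO(\mathbb R)}$ is below a fixed threshold (this is Lemma \ref{weight-bmo} and the remark after Lemma \ref{A2-bmo}; the paper even states explicitly that it doubts any quantitative $A_\infty$ bound for the density is available for exactly this reason). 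The composition estimate that is actually true (Lemma \ref{bmo-comp}) is linear in $[\varphi']_{A_\infty(\mathbb R)}$, not a function of $\|\log\varphi'\|_{\BMO(\mathbb R)}$, so your inequality $u(t)\le C\int_0^t e^{cu(s)}\|\partial_x b_\epsilon(s,\cdot)\|_{\BMO(\mathbb R)}\,ds$ has no justification. Moreover, even if one granted that lemma, your Gr\"onwall step only yields $u(t)\le -c^{-1}\log\bigl(1-cC\int_0^t\|\partial_x b_\epsilon(s,\cdot)\|_{\BMO(\mathbb R)}\,ds\bigr)$, which blows up once $\int_0^t\|\partial_x b\|_{\BMO(\mathbb R)}\,ds$ reaches $1/(cC)$; this is not the claimed bound \eqref{bmo-flow-1}, whose ``denominator'' $\exp\bigl(-c\int_0^t\|\partial_x b\|_{\BMO(\mathbb R)}\,ds\bigr)$ is always positive. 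Your appeal to ``a continuation argument'' conflates the two and hides the real difficulty: nothing in your setup lets you restart the estimate past the blow-up time, because at that point the $A_\infty$ constant of the flow is no longer controlled.

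The paper's proof of Theorem \ref{pri-flow} is organized precisely to avoid both problems: it works on subintervals $[T_i,T_{i+1}]$ chosen so that $\int_{T_i}^{T_{i+1}}\|\partial_x b(s,\cdot)\|_{\BMO(\mathbb R)}\,ds$ is of fixed size $\delta_0$, runs a bootstrap on each subinterval keeping $\|\log|\partial_x\phi_{T_i}(t,\cdot)|\|_{\BMO(\mathbb R)}<\epsilon_0$ so that the small-norm implication $[e^v]_{A_\infty(\mathbb R)}\le 1+C_4\|v\|_{\BMO(\mathbb R)}$ of Lemma \ref{weight-bmo} is legitimately applicable, and then glues the subintervals through the semigroup property, losing a fixed factor $C_3(1+C_4)$ per subinterval; since the number of subintervals is comparable to $\delta_0^{-1}\int_0^t\|\partial_x b\|_{\BMO(\mathbb R)}\,ds$, this iteration—not a single Gr\"onwall inequality—is what produces the exponential factor in \eqref{bmo-flow-1} globally on $[0,T]$. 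Separately, your passage to the limit is too thin: the paper needs the truncation step of Theorem \ref{pri-flow-2}, Reimann's Zygmund-class theory for existence/uniqueness, and the machinery of Clop--Jiang--Mateu--Orobitg (growth and $\mathrm{Exp}_\mu(L/\log L)$ estimates on $\mathrm{div}_\mu b$) to know the limiting flow is absolutely continuous with $\log|\partial_x\phi(t,x)|=\int_0^t\partial_x b(s,\phi(s,x))\,ds$, and it obtains the $A_\infty$ property of the density qualitatively from Theorem \ref{bmo-transport} together with Jones' theorem rather than from any quantitative bound—consistent with the fact that no such quantitative bound is expected to exist, contrary to what your scheme would purport to deliver.
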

Some remarks are in order. First, from the well-known fact that the logarithm of an $A_\infty$
weight is a BMO function (see Lemma \ref{weight-bmo}) and the formula
$$\log\Big|\dfrac{\partial}{\partial x}\phi(t,x)\Big|=\int_0^t\dfrac{\partial}{\partial x}b(s,\phi(s,x))\,ds\in  \BMO(\R),$$
we see that our condition \eqref{e11} is critical, i.e., for each $t$,
$$
x\mapsto\frac{\partial}{\partial x}b(t,x)
$$ is necessarily a $ \BMO(\R)$-function.
Second, taking
$$b(x)=x\log|x|$$
for example, indicates that $b$ generalizes a flow $\phi(t,x)$ with
$$
\begin{cases}
\phi(t,x)=\mathrm{sign} x  \,|x|^{e^{t}}\\
\dfrac{\partial}{\partial x}\phi(t,x)=e^t|x|^{e^t-1}\in A_\infty(\R)\\
\left\|\log\Big|\dfrac{\partial}{\partial x}\phi(t,x)\Big|\right\|_{ \BMO(\R)}\le (e^t-1)\|\log|x|\|_{ \BMO(\R)}\le Cte^t.
\end{cases}
$$
This implies that our estimate \eqref{bmo-flow-1} is sharp.

For the proof, we shall first provide a version of the result in smooth setting, namely,
\begin{equation}\label{smooth-bmo}
b\in L^1(0,T;C^1(\R))\ \ \text{with}\ \ \frac{\,\partial b(t,x)}{\,\partial x}\in L^1(0,T; \BMO(\R)),
\end{equation}
and then use the compactness argument based on development of non-smooth flows from
\cite{Am,CJMO1,CDL08,DPL}. Note that since the Zygmund condition is satisfied for $b$, existence and uniqueness follow
already from Reimann \cite{R2}. The key of the proof is to establish \eqref{bmo-flow-1}, which even in
the smooth setting seems non-trivial.
By the composition result of Jones \cite{J}, a homeomorphism $\phi$ preserves $\BMO(\R)$ if and only if
$\phi'$ is an $A_\infty(\R)$ weight. However, even we assume that $b$ is smooth on $\R$,
it seems mysteries to us whether one can prove the generalized flow carries $A_\infty(\R)$ density
directly from \eqref{e11}.

In order to overcome the difficulties,  we further consider the
simpler case
\begin{equation}\label{lipschitz-case}
b\in L^1(0,T;C^1(\R))\ \ \text{with}\ \ \frac{\,\partial b(t,x)}{\,\partial x}\in L^1(0,T;L^\infty(\R)),
\end{equation}
where the generalized flow carries $A_\infty(\R)$-density following from the Cauchy-Lipschitz theory.
Then we observe
that for a function $v$ with small $\BMO(\mathbb R)$-norm, $e^v$ lies in the $A_\infty(\mathbb R)$ class with its norm
controlled by the $\BMO(\mathbb R)$-norm of $v$ linearly. Then by using the flow with $A_\infty(\mathbb R)$-density in the smooth setting, a quantitative estimate of the norm of composition in $\BMO(\mathbb R)$,  and a bootstrap argument,
we succeed in showing \eqref{bmo-flow-1} in the Lipschitz case \eqref{lipschitz-case}.
Finally a truncation argument involving the Arzel\'a-Ascoli theorem allows us to pass to the case \eqref{smooth-bmo};
see Section 3.

One may wonder if a quantitative estimate of the $A_\infty(\mathbb R)$-norm of $$\left|\dfrac{\partial}{\partial x}\phi(t,\cdot)\right|$$
can be established. Although we do not know a positive answer, we doubt it since a quantitative bound for an $A_\infty(\mathbb R)$-weight $e^v$ holds only for $v$ with small $\BMO(\mathbb R)$-norm; see Lemma \ref{A2-bmo} and Lemma \ref{weight-bmo} below. However, there is a nice result regarding homeomorphisms preserving $A_p(\mathbb R)$-weights by \cite{JN}.

We next apply the result on flow to study the transportation problem in BMO space.
Besides its own interest, this problem and its dual equation also arise naturally from the study of conservation laws (see \cite{BJ99} for instance).
In \cite{CJMO0} (somewhat related to \cite{Muc}), a well-posedness of the Cauchy problem of the transport equation in $\BMO(\R^n)$ has been established for $n\ge 2$ and then pushed to the case $n=1$ in \cite{Xiao}. The main step over there is to use the hypothesis that
$$
(t,x)\mapsto \begin{cases} S_A b(t,x)\ &\ \forall\ \ n\ge 2\\
\frac{\partial}{\partial x}b(t,x)\ &\ \forall\ \ n=1
\end{cases}
\ \ \text{
belongs to}\ \ L^1(0,T; L^\infty(\mathbb R^n))\ \ \text{with a suitably small norm},
$$ the quasi-conformal flows of \cite{R2} and the composition results obtained in \cite{kyz,R1} for $n\ge 2$ (cf. \cite{KKSS,Vo,YYZ}) and in \cite{J} for $n=1$. But nevertheless, as our second main result we utilize Theorem \eqref{main} and \cite[Theorem]{J} to discover the following stronger well-posedness of the transport equation in $\BMO(\mathbb R)$.

\begin{thm}\label{bmo-transport}
Let $b(t,x):\,[0,T]\times\rr\mapsto \rr$ be in $L^1(0,T; L^{1}_\loc(\R))$ and satisfy
$$\frac{\,\partial b(t,x)}{\,\partial x}\in L^1(0,T;\mathrm{BMO}(\mathbb R)).$$
Then for $u_0\in \BMO(\R)$ there exists a unique solution $u\in L^\infty(0,T;\BMO(\mathbb R))$ to the Cauchy problem of the transport equation
$$
\begin{cases}
    \Bigg(\dfrac{\partial u}{\partial t}-b\cdot \dfrac{\partial u}{\partial x}\Bigg)(t,x)=0\ & \forall\ (t,x)\in (0,T)\times\R;\\
      u(0,x)=u_0(x)\ & \ \forall\ x\in\R.
      \end{cases}
      $$
Moreover, for each $t\in [0,T]$, it holds that
$$
\begin{cases}
u(t,x)=u_0(\phi(t,x));\\
\frac{\,\partial}{\,\partial t}\phi(t,x)=b(t,\phi(t,x)),
\end{cases}
$$
and there exist $C_2,c>0$ such that
\begin{equation}\label{est-solution}
\|u\|_{\BMO(\mathbb R)}\le C_2\|u_0\|_{\BMO(\mathbb R)} \exp\left(c\int_{0}^t\left\|\dfrac{\partial}{\partial x}b(s,\cdot)\right\|_{\BMO(\mathbb R)}\,ds\right).
\end{equation}
\end{thm}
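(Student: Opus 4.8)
The plan is to solve the Cauchy problem by the method of characteristics using the flow furnished by Theorem \ref{main}, to upgrade the qualitative composition theorem of Jones \cite{J} to the quantitative bound \eqref{est-solution} by a subdivision-and-iteration argument, and to deduce uniqueness from the uniqueness of that flow together with the renormalization machinery. First I would apply Theorem \ref{main} to $b$ and, via the symmetry $b\mapsto -b$ (which leaves $\|\partial_x b(s,\cdot)\|_{\BMO(\R)}$ unchanged), to $-b$; this yields for each $t\in[0,T]$ a homeomorphism $\phi(t,\cdot)$ of $\R$ whose derivative is an $A_\infty(\R)$ weight obeying \eqref{bmo-flow-1}, together with its spatial inverse. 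Since $(\phi(t,\cdot)^{-1})'=1/(\phi'(t,\cdot)\circ\phi(t,\cdot)^{-1})$ and, by \cite{J}, $\phi(t,\cdot)^{-1}$ preserves $\BMO(\R)$ precisely when $\phi(t,\cdot)$ does, $|\partial_x(\phi(t,\cdot)^{-1})|$ is again an $A_\infty(\R)$ weight. The solution is then $u(t,x)=u_0(\phi(t,x))$, where $\phi(t,\cdot)$ is the characteristic map built from the flow of Theorem \ref{main} and its spatial inverse (for time-independent $b$ it is the flow itself; in general the spatial inverse of the flow of $-b$, which solves $\partial_t\phi=b(t,\cdot)\,\partial_x\phi$ with $\phi(0,\cdot)=\mathrm{id}$). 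That this $u$ solves the transport equation distributionally I would verify first for $b\in L^1(0,T;C^1(\R))$ by the classical characteristic computation and then pass to the general case through the compactness/mollification scheme of \cite{Am,CDL08,DPL,CJMO1} used to prove Theorem \ref{main}; the membership $u(t,\cdot)\in\BMO(\R)$, with some bound, is immediate from $|\partial_x\phi(t,\cdot)|\in A_\infty(\R)$ and \cite[Theorem]{J}.

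For the quantitative estimate \eqref{est-solution}, the difficulty — stressed in the introduction — is that no quantitative $A_\infty(\R)$-bound for $|\partial_x\phi(t,\cdot)|$ is available, so \eqref{bmo-flow-1} cannot be inserted directly into Jones' theorem; instead I would use that a \emph{small} $\BMO(\R)$-norm of $\log|\phi'|$ \emph{does} give a quantitative $A_2(\R)$-bound, by Lemma \ref{A2-bmo} and Lemma \ref{weight-bmo}, and iterate. Fix $t$, choose $\delta>0$ small enough that Lemma \ref{A2-bmo} applies and $e^{c\delta}\le 2$, and take $0=t_0<\dots<t_m=t$ with $N_i:=\int_{t_{i-1}}^{t_i}\|\partial_x b(s,\cdot)\|_{\BMO(\R)}\,ds\le\delta$ for each $i$. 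By the flow property $u(t_i,\cdot)=u(t_{i-1},\cdot)\circ\Phi_i$ for $\Phi_i$ the characteristic map relating times $t_{i-1}$ and $t_i$; applying \eqref{bmo-flow-1} on $[t_{i-1},t_i]$ (which is invariant under shifting $b$ in time) and \cite{J} gives $\|\log|\Phi_i'|\|_{\BMO(\R)}\le C_1N_ie^{cN_i}\le 2C_1N_i\le 2C_1\delta$. Being small, this together with Lemma \ref{A2-bmo} gives $[|\Phi_i'|]_{A_2(\R)}\le 1+CN_i$, whence the quantitative form of \cite[Theorem]{J} yields $\|f\circ\Phi_i\|_{\BMO(\R)}\le(1+C'N_i)\|f\|_{\BMO(\R)}$. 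Composing the $m$ steps, $\|u(t,\cdot)\|_{\BMO(\R)}\le\prod_{i=1}^m(1+C'N_i)\,\|u_0\|_{\BMO(\R)}\le\exp\!\big(C'\sum_{i=1}^m N_i\big)\|u_0\|_{\BMO(\R)}=\|u_0\|_{\BMO(\R)}\exp\!\big(C'\!\int_0^t\|\partial_x b(s,\cdot)\|_{\BMO(\R)}\,ds\big)$, which is \eqref{est-solution}; in particular $u\in L^\infty(0,T;\BMO(\R))$ since the right side is finite and nondecreasing in $t$.

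For uniqueness, any $u\in L^\infty(0,T;\BMO(\R))$ solving the Cauchy problem must coincide with the characteristic solution. By John--Nirenberg, $\partial_x b\in L^1(0,T;\BMO(\R))\subset L^1(0,T;L^p_{\mathrm{loc}}(\R))$ for every finite $p$, so $b\in L^1(0,T;W^{1,p}_{\mathrm{loc}}(\R))$; and $u(t,\cdot)\in\BMO(\R)\subset L^{p'}_{\mathrm{loc}}(\R)$ for every finite $p'$, so with $p=p'=2$ the DiPerna--Lions commutator vanishes and $u$ is a renormalized solution. Moreover $b$ satisfies the Zygmund condition, because $b'\in\BMO(\R)$ forces $|b(x+y)+b(x-y)-2b(x)|\le 2\|b'\|_{\BMO(\R)}\,|y|$, so the flow is unique and, for each $t$, a homeomorphism with $A_\infty(\R)$ Jacobian; the Eulerian--Lagrangian equivalence of \cite{Am,DPL} then identifies the renormalized solution with $u_0\circ\phi$, the additive constant being fixed by $u(0,\cdot)=u_0$ (alternatively one argues by duality against solutions of the adjoint continuity equation built from the same flow). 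I expect this uniqueness step — making renormalization rigorous for a solution that is only $\BMO(\R)$, hence unbounded, in the space variable — together with checking that the quantitative Jones bound is genuinely \emph{linear} in $\|\log h'\|_{\BMO(\R)}$ on the small-norm regime, so that the product over the subdivision telescopes to the clean exponential in \eqref{est-solution}, to be the main technical obstacles.
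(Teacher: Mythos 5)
Your overall architecture is the same as the paper's: represent the solution by composition with the (unique) flow coming from Theorem \ref{main}, obtain \eqref{est-solution} by cutting $[0,T]$ into pieces on which $\int\|\partial_x b\|_{\BMO(\R)}$ is small, using \eqref{bmo-flow-1} plus the small-norm $A_\infty$ bound of Lemma \ref{weight-bmo} and the quantitative Jones lemma (Lemma \ref{bmo-comp}) on each piece, and then iterating via the semigroup property; uniqueness is delegated to DiPerna--Lions renormalization, exactly as the paper does by citation to \cite{CJMO,CJMO0,DPL}. (Your care about which characteristic map to use for time-dependent $b$ is fine and does not affect the comparison.)

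The genuine gap is in the per-step composition bound on which your telescoping rests. You assert $\|f\circ\Phi_i\|_{\BMO(\R)}\le(1+C'N_i)\|f\|_{\BMO(\R)}$, i.e.\ that the composition operator norm tends to $1$ linearly as $\|\log|\Phi_i'|\|_{\BMO(\R)}\to0$, and you yourself flag this as an unresolved obstacle. It does not follow from the tools available here: Lemma \ref{bmo-comp} gives only $\|f\circ\varphi^{-1}\|_{\BMO(\R)}\le C_3\,[\varphi']_{A_\infty(\R)}\|f\|_{\BMO(\R)}$, where $C_3$ comes from the John--Nirenberg constants and is in no way close to $1$; combined with Lemma \ref{weight-bmo} the best per-step factor you get is $C_3(1+C_4\cdot CN_i)$, not $1+C'N_i$. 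With that weaker factor, your product over an \emph{arbitrary} partition with $N_i\le\delta$ is $C_3^{m}\prod_i(1+C_4CN_i)$, and nothing controls $C_3^{m}$ unless you also control the number $m$ of pieces. This is precisely how the paper closes the argument, and how you should repair yours: choose the partition so that each piece (except the last) carries BMO-mass exactly comparable to a fixed $\delta_0$ determined by $\epsilon_0$, so that $m\le 1+\delta_0^{-1}\int_0^t\|\partial_x b(s,\cdot)\|_{\BMO(\R)}\,ds$; then
$$\big(C_3(1+C_4 2^{-1}\epsilon_0)\big)^{m}\le C_2\exp\left(c\int_0^t\Big\|\dfrac{\partial}{\partial x}b(s,\cdot)\Big\|_{\BMO(\mathbb R)}\,ds\right),$$
which is exactly \eqref{est-solution} (the theorem permits the prefactor $C_2$ and an absolute $c$; your ``clean'' bound with constant $1$ in front is stronger than claimed and unnecessary). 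A secondary caution on your uniqueness sketch: the standard global DiPerna--Lions theory requires growth like $b/(1+|x|)\in L^1+L^\infty$, which fails here since $|b(t,x)|$ can grow like $|x|\log|x|$; the paper instead invokes the well-posedness framework of \cite{CJMO,CJMO0} (sub-exponentially integrable divergence, Gaussian-weighted Orlicz bounds as in Step 1 of the proof of Theorem \ref{main}), so your commutator argument would need that framework rather than the classical one.
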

Based on the duality of Hardy space $H^1$ and BMO by Fefferman and Stein \cite{FS72}, the above theorem provides
the existence of solutions in Hardy space $H^1$ to the continuity equation
$$
\begin{cases}
    \Bigg(\dfrac{\partial u}{\partial t}-\dfrac{\partial }{\partial x}(bu)\Bigg)(t,x)=0\ & \forall\ (t,x)\in (0,T)\times\R;\\
      u(0,x)=u_0(x)\ & \ \forall\ x\in\R;
      \end{cases}
      $$
see \cite{CJyMO} for a study of the equation in higher dimensions and a proof of uniqueness (cf. \cite[Theorem 3]{CJyMO}).

The paper is organized as follows. In Section \ref{s2}, we recall and establish some results concerning
Muckenhoupt weights, $ \BMO(\R)$, and continuity estimates. In Section \ref{s3}, we present the key a priori estimation for the flow, i.e., the version of Theorem \ref{main} in the smooth setting. In Section \ref{s4}, we verify the above main results.

\medskip

\noindent{\bf Notation}. {\it In the above and below, $C,C_1,C_2,...$ and $c,c_1,c_2,...$ stand for positive constants.}

\section{\large Weights and bounded mean oscillation}\label{s2}

For a locally integrable function $f$ and an open interval $I\subset\R$, we denote by $f_I$
the integral average of $f$ on $I$.
We say that a locally integrable nonnegative function $w$ belongs to the Muckenhoupt $A_p(\mathbb R)$ class,  $1<p<\infty$, if
$$[w]_{A_p(\mathbb R)}=\sup_{\text{intervals}\, I\subset \R} \left(\frac{1}{|I|}\int_I w\,dx\right) \left(\frac{1}{|I|}\int_I w^{\frac{1}{1-p}}\,dx\right)^{p-1}<\infty,$$
and that $w\in A_\infty(\mathbb R)$, if
$$[w]_{A_\infty(\mathbb R)}=\sup_{\text{intervals}\, I\subset \R} \left(\frac{1}{|I|}\int_I w\,dx\right)\exp\left(-\frac {1}{|I|}\int_I\big(\log w\big)\,dx\right)<\infty.$$

Note that, if $w>0$ a.e., then $[w]_{A_\infty(\mathbb R)}\ge 1$ follows from the Jensen inequality that
\begin{align*}
[w]_{A_\infty(\mathbb R)}\ge w_I\exp\left([-\log w]_I\right)\ge \exp\left((\log w)_I\right)\exp\left([-\log w]_I\right)=1,
\end{align*}
and similarly
$$[w]_{A_p(\mathbb R)}\ge [w]_{A_\infty(\mathbb R)}\ \ \forall\ \ p\in (1,\infty).
$$

We need the following quantitative version of reverse H\"older inequality for $A_\infty(\mathbb R)$-weight from \cite{HP13}; see also \cite{LOP17}.
\begin{lem}\label{rhi-weight}
Let $w\in A_\infty(\mathbb R)$ and $I\subset\R$ be an arbitrary interval. Then there exits
$$
\begin{cases}
\tau>0;\\
r_w=1+\Big({\tau[w]_{A_\infty(\mathbb R)}}\Big)^{-1};\\
\epsilon_w=\Big({1+\tau[w]_{A_\infty(\mathbb R)}}\Big)^{-1},
\end{cases}
$$
such that
$$
\begin{cases}
\left(|I|^{-1}\int_I w^{r_w}\,dx\right)^{1/r_w}\le 2|I|^{-1}\int_I w\,dx;\\
\frac{w(E)}{w(I)}=\frac{\int_E w(x)\,dx}{\int_I w(x)\,dx}\le 2\left(\frac{|E|}{|I|}\right)^{\epsilon_w}\ \ \text{for any measurable set}\ E\subset I.
\end{cases}
$$
\end{lem}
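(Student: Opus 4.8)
The plan is to deduce both displayed assertions of the lemma from the first one alone, i.e.\ from the reverse H\"older inequality $\big(|I|^{-1}\int_I w^{r_w}\,dx\big)^{1/r_w}\le 2|I|^{-1}\int_I w\,dx$. Granting it, the second inequality is immediate from H\"older's inequality with exponents $r_w$ and $r_w/(r_w-1)$: for any measurable $E\subset I$,
$$w(E)=\int_E w\,dx\le\Big(\int_E w^{r_w}\,dx\Big)^{1/r_w}|E|^{1-1/r_w}\le\Big(\int_I w^{r_w}\,dx\Big)^{1/r_w}|E|^{1-1/r_w},$$
while the reverse H\"older inequality gives $\big(\int_I w^{r_w}\,dx\big)^{1/r_w}=|I|^{1/r_w}\big(|I|^{-1}\int_I w^{r_w}\,dx\big)^{1/r_w}\le 2|I|^{1/r_w-1}w(I)$; combining the two yields $w(E)\le 2w(I)(|E|/|I|)^{1-1/r_w}$. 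Since
$$1-\frac1{r_w}=1-\frac{1}{1+(\tau[w]_{A_\infty(\mathbb R)})^{-1}}=\frac{(\tau[w]_{A_\infty(\mathbb R)})^{-1}}{1+(\tau[w]_{A_\infty(\mathbb R)})^{-1}}=\frac{1}{1+\tau[w]_{A_\infty(\mathbb R)}}=\epsilon_w,$$
the same $\tau$ serves for both parts, and everything is reduced to producing an absolute $\tau>0$ for which the reverse H\"older inequality holds with $r_w=1+(\tau[w]_{A_\infty(\mathbb R)})^{-1}$.

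For this sharp reverse H\"older inequality I would follow Hyt\"onen--P\'erez \cite{HP13} (see also \cite{LOP17}). The first move is to pass from the exponential constant $[w]_{A_\infty(\mathbb R)}$ to the (smaller) Fujii--Wilson constant $[w]'_{A_\infty(\mathbb R)}=\sup_I w(I)^{-1}\int_I M(w\mathbf{1}_I)\,dx$, where $M$ is the Hardy--Littlewood maximal operator, via the standard comparison $[w]'_{A_\infty(\mathbb R)}\le C[w]_{A_\infty(\mathbb R)}$ between the two $A_\infty$ constants. Fixing an interval $I$, normalizing $|I|^{-1}\int_I w=1$, and writing $M_I$ for the dyadic maximal operator localized to $I$, one has $\int_I w^{r}\,dx\le\int_I w\,(M_I w)^{r-1}\,dx$ for $r>1$ (since $w\le M_I w$ a.e.); expanding $(M_I w)^{r-1}$ by the layer-cake formula and using $M_I w\ge 1$ on $I$, this reduces the task to estimating $\int_1^\infty\lambda^{r-2}\,w(\{x\in I:\,M_I w(x)>\lambda\})\,d\lambda$. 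The super-level sets of $M_I w$ are then controlled by a Calder\'on--Zygmund stopping-time iteration run at geometrically spaced heights, and it is precisely here that the Fujii--Wilson bound is fed back in; choosing $r-1$ a small enough absolute multiple of $1/[w]'_{A_\infty(\mathbb R)}$ makes the resulting series sum to at most a constant times $|I|$, and a final adjustment of the parameters brings that constant down to $2$. Finally, since a reverse H\"older inequality only weakens when its exponent is decreased (Jensen's inequality for $L^p$ averages), one may pass to the exponent $1+(\tau[w]_{A_\infty(\mathbb R)})^{-1}$ for a suitable absolute $\tau$.

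The main obstacle is the \emph{linear} dependence of $r_w-1$ (equivalently of $\epsilon_w$) on $[w]_{A_\infty(\mathbb R)}$. The classical Gehring/Coifman--Fefferman self-improvement argument yields only an exponentially small gain $r_w-1\gtrsim e^{-c[w]_{A_\infty(\mathbb R)}}$, while the alternative route through $\log w\in\BMO(\mathbb R)$ and the John--Nirenberg inequality produces a reverse H\"older inequality whose constant degrades as $[w]_{A_\infty(\mathbb R)}\to\infty$. Neither of these is enough here. Extracting the sharp rate is exactly what working with the Fujii--Wilson constant (in place of the exponential one) buys in \cite{HP13}, and it is this quantitative sharpness that makes the estimates \eqref{bmo-flow-1} and \eqref{est-solution} effective.
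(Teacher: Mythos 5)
Your proposal is correct and takes essentially the same route as the paper: the paper offers no proof of this lemma, simply quoting the sharp reverse H\"older inequality with $r_w-1\simeq\big([w]_{A_\infty(\mathbb R)}\big)^{-1}$ from Hyt\"onen--P\'erez \cite{HP13} (see also \cite{LOP17}), which is exactly the Fujii--Wilson-constant argument you outline, and your H\"older-inequality deduction of the measure estimate from the reverse H\"older inequality, with $1-1/r_w=\epsilon_w$, is the standard passage used in those references. No gap to report.
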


 By \cite[Theorem]{J}, we know that an increasing homeomorphism $\varphi$ of  $\R$ preserves $\BMO$ if and only if $\varphi'$ belongs to $A_\infty(\mathbb R)$. By using the previous lemma we deduce the following quantitative version; see \cite{ACS} for an explicit bound in terms of reverse H\"older index and \cite{Bl,FHS,Fo} for related results.
\begin{lem}\label{bmo-comp} Let $\varphi$ be an increasing homeomorphism on $\rr$ with $\varphi'\in A_\infty(\mathbb R)$. Then there is $C_3>0$ such that
$$\|f\circ \varphi^{-1}\|_{ \BMO(\R)}\le C_3[\varphi']_{A_\infty(\mathbb R)}\|f\|_{ \BMO(\R)}.$$
\end{lem}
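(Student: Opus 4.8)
The strategy is to reduce the oscillation of $f\circ\varphi^{-1}$ on an arbitrary interval $J\subset\rr$ to the oscillation of $f$ on the preimage interval $I=\varphi^{-1}(J)$, controlling the distortion by the $A_\infty(\mathbb R)$ constant of $\varphi'$ through the quantitative reverse Hölder inequality of Lemma \ref{rhi-weight}. First I would fix $J$ and write $I=\varphi^{-1}(J)$, which is again an interval since $\varphi$ is an increasing homeomorphism; absolute continuity of $\varphi$ (a consequence of $\varphi'\in A_\infty(\mathbb R)\subset L^1_{\loc}$) gives the change-of-variables identity $|J|=\int_I\varphi'(x)\,dx=\varphi'(I)$, and more generally $|\varphi(E)|=\int_E\varphi'\,dx$ for measurable $E\subset I$. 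The natural candidate constant to subtract is $c_I:=f_I$ (the ordinary average of $f$ over $I$, not the $\varphi'$-weighted one), so that
$$
\frac{1}{|J|}\int_J |f\circ\varphi^{-1}(y)-c_I|\,dy=\frac{1}{\varphi'(I)}\int_I |f(x)-f_I|\,\varphi'(x)\,dx,
$$
after the substitution $y=\varphi(x)$.

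The core estimate is then to bound the weighted average on the right by $[\varphi']_{A_\infty(\mathbb R)}$ times $\|f\|_{\BMO(\R)}$. Here I would apply Hölder's inequality with the reverse Hölder exponent: with $r=r_w$ and $r'$ its conjugate (where $w=\varphi'$),
$$
\frac{1}{\varphi'(I)}\int_I |f-f_I|\,\varphi'\,dx\le \frac{|I|}{\varphi'(I)}\left(\frac{1}{|I|}\int_I |f-f_I|^{r'}\,dx\right)^{1/r'}\left(\frac{1}{|I|}\int_I (\varphi')^{r}\,dx\right)^{1/r}.
$$
By Lemma \ref{rhi-weight} the last factor is at most $2\,\varphi'(I)/|I|$, cancelling the prefactor $|I|/\varphi'(I)$ up to the constant $2$. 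For the middle factor I would invoke the John–Nirenberg inequality, which bounds $\bigl(|I|^{-1}\int_I|f-f_I|^{r'}\bigr)^{1/r'}$ by $C(r')\,\|f\|_{\BMO(\R)}$; since $r'=1+\tau[\varphi']_{A_\infty(\mathbb R)}$ grows linearly in $[\varphi']_{A_\infty(\mathbb R)}$, the John–Nirenberg constant $C(r')$ — which is of the form $c\,r'$ for large $r'$ by the exponential decay of the distribution function — contributes the factor $[\varphi']_{A_\infty(\mathbb R)}$ claimed in the statement. Combining these yields
$$
\frac{1}{|J|}\int_J|f\circ\varphi^{-1}-c_I|\,dy\le C_3\,[\varphi']_{A_\infty(\mathbb R)}\,\|f\|_{\BMO(\R)},
$$
and taking the supremum over $J$ and using that subtracting the best constant only improves the oscillation gives the lemma.

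The main obstacle is the quantitative tracking of constants: one must make sure that each appeal to reverse Hölder and to John–Nirenberg produces at worst a \emph{linear} dependence on $[\varphi']_{A_\infty(\mathbb R)}$, with no hidden exponential blow-up. This is exactly why Lemma \ref{rhi-weight} is stated with the explicit exponent $r_w=1+(\tau[\varphi']_{A_\infty})^{-1}$ and why one needs the sharp form of John–Nirenberg in $L^{r'}$ with $r'$ large; a careless use of a fixed exponent, or of a non-quantitative reverse Hölder inequality, would lose the sharp bound. A secondary point to handle carefully is the direction of the homeomorphism — the same argument applied to $\varphi^{-1}$ (whose derivative is $(\varphi^{-1})'=1/(\varphi'\circ\varphi^{-1})$, again an $A_\infty(\mathbb R)$-weight with comparable constant) shows the estimate is genuinely about $\varphi$ preserving $\BMO(\R)$ in both directions, consistent with Jones's theorem cited above.
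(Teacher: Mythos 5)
Your argument is correct, but it runs along a different track than the paper's. The paper fixes an interval $I$, looks at the level sets $E_\lambda=\{x\in I:|f\circ\varphi^{-1}(x)-f_{\varphi^{-1}(I)}|>\lambda\}$, and combines the John--Nirenberg inequality for $f$ on $\varphi^{-1}(I)$ with the \emph{second} conclusion of Lemma \ref{rhi-weight} (the measure-distortion estimate $w(E)/w(I)\le 2(|E|/|I|)^{\epsilon_w}$ applied to $w=\varphi'$), obtaining $|E_\lambda|/|I|\le 2c_1\exp(-c_2\epsilon_w\lambda/\|f\|_{\BMO(\mathbb R)})$; integrating this exponential decay gives the BMO bound with constant $\sim\epsilon_w^{-1}=1+\tau[\varphi']_{A_\infty(\mathbb R)}$, i.e.\ the same linear dependence you obtain. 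You instead change variables, apply H\"older with the reverse H\"older exponent $r_w$ from the \emph{first} conclusion of Lemma \ref{rhi-weight}, and control $\bigl(|I|^{-1}\int_I|f-f_I|^{r'}\bigr)^{1/r'}$ by the $L^{r'}$-form of John--Nirenberg, whose constant is $O(r')=O(1+\tau[\varphi']_{A_\infty(\mathbb R)})$; this duality-style computation is equally quantitative, while the paper's route yields as a byproduct a John--Nirenberg-type exponential decay for $f\circ\varphi^{-1}$ itself. One shared caveat: your parenthetical claim that local absolute continuity of $\varphi$ follows from $\varphi'\in A_\infty(\mathbb R)\subset L^1_{\mathrm{loc}}$ is not literally true (adding a singular Cantor-type part leaves the a.e.\ derivative, hence its $A_\infty$ membership, unchanged), and the identity $|\varphi(E)|=\int_E\varphi'\,dx$ is exactly what is needed both in your change of variables and, implicitly, in the paper's step $|E_\lambda|=w(\varphi^{-1}(E_\lambda))$; as in Jones's theorem, absolute continuity should be read as part of the hypothesis, so this is a shared convention rather than a defect of your argument. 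Your closing remark about $\varphi^{-1}$ is not needed for the stated estimate and the comparability of $[(\varphi^{-1})']_{A_\infty(\mathbb R)}$ with $[\varphi']_{A_\infty(\mathbb R)}$ would require a separate justification, so it is best omitted or flagged as an aside.
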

\begin{proof}
Recall that for a $\BMO(\R)$-function $f$, the John-Nirenberg inequality states that, for all $I\subset \rr$,
 there exists $c_1,c_2>0$ such that
$$\big|\{x\in I:\,|f(x)-f_I|>\lambda\}\big|\le c_1|I|\exp\left(-\frac{c_2\lambda}{\|f\|_{\BMO(\mathbb R)}}\right)\  \ \forall\ \ \lambda>0;$$
see \cite{Gra04} for instance.

Suppose that $\varphi$ is an increasing homeomorphism of $\rr$ with $\varphi'\in A_\infty(\mathbb R)$. By \cite[Theorem]{J},
we have
$$
f\circ \varphi^{-1}\in \BMO.
$$

For every interval $$I=(a,b)\subset \rr,$$ set
$$E_\lambda=\Big\{x\in I:\,|f\circ\varphi^{-1}(x)-f_{\varphi^{-1}(I)}|>\lambda\Big\}.$$
Then
$$\varphi^{-1}(E_\lambda)=\Big\{y\in \varphi^{-1}(I):\,|f(y)-f_{\varphi^{-1}(I)}|>\lambda\Big\},$$
and hence, by Lemma \ref{rhi-weight} and the John-Nirenberg inequality, we get
$$\frac{|E_\lambda|}{|I|}\le 2\left(\frac{|\varphi^{-1}(E_\lambda)|}{\varphi^{-1}(I)}\right)^{\epsilon_w}
\le 2c_1\exp\left(-\frac{c_2\epsilon_w\lambda}{\|f\|_{\BMO(\mathbb R)}}\right)\ \ \text{where}\ \
\epsilon_w=\big(1+\tau [\varphi']_{A_\infty(\mathbb R)}\big)^{-1},
$$
thereby finding
$$\|f\circ\varphi^{-1}\|_{\BMO(\mathbb R)} \le C(1+\tau [\varphi']_{A_\infty(\mathbb R)})\|f\|_{\BMO(\mathbb R)}\le C_3[\varphi']_{A_\infty(\mathbb R)}\|f\|_{\BMO(\mathbb R)},$$
where we have used the fact that $\varphi$ is an increasing homeomorphism on $\rr$ with $$[\varphi']_{A_\infty(\mathbb R)}\ge 1.$$
\end{proof}

The following result is well-known; see \cite{CPP10,Gra04} for instance.
\begin{lem}\label{A2-bmo} There exists  $\alpha<1<\beta$ such that for
	$$
	\begin{cases}
	f\in \BMO(\R);\\
	s\in\rr;\\
|s|\le {\alpha}{\|f\|^{-1}_{ \BMO(\R)}},
\end{cases}
$$
it holds that
$$
e^{sf}\in A_2(\mathbb R)\ \ \text{with}\ \ [e^{sf}]_{A_2(\mathbb R)}\le \beta^2.
$$
\end{lem}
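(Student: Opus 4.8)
The plan is to read the $A_2$ bound directly off the John--Nirenberg inequality, which provides absolute constants $c_1,c_2>0$ with
$$
\big|\{x\in I:\,|f(x)-f_I|>\lambda\}\big|\le c_1|I|\exp\Big(-\frac{c_2\lambda}{\|f\|_{\BMO(\R)}}\Big)\qquad\forall\ \lambda>0
$$
for every interval $I\subset\R$; the degenerate case $\|f\|_{\BMO(\R)}=0$ is trivial, since then $f$ is a.e. constant, $e^{sf}$ is a.e. a positive constant, and $[e^{sf}]_{A_2(\R)}=1$. First I would fix an interval $I$ and split $e^{\pm sf}=e^{\pm sf_I}\,e^{\pm s(f-f_I)}$, so that the scalars $e^{\pm sf_I}$ cancel in the $A_2$ quotient and, since $e^{\pm s(f-f_I)}\le e^{|s|\,|f-f_I|}$,
$$
\Big(\frac1{|I|}\int_I e^{sf}\,dx\Big)\Big(\frac1{|I|}\int_I e^{-sf}\,dx\Big)\le\Big(\frac1{|I|}\int_I e^{|s|\,|f-f_I|}\,dx\Big)^2 .
$$
It therefore suffices to bound $\frac1{|I|}\int_I e^{|s|\,|f-f_I|}\,dx$ uniformly in $I$ by some constant $\beta$, whereupon $[e^{sf}]_{A_2(\R)}\le\beta^2$.

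Next I would estimate this average by the layer-cake formula followed by the John--Nirenberg bound:
$$
\frac1{|I|}\int_I e^{|s|\,|f-f_I|}\,dx=1+|s|\int_0^\infty e^{|s|\lambda}\,\frac{\big|\{x\in I:\,|f(x)-f_I|>\lambda\}\big|}{|I|}\,d\lambda\le 1+c_1|s|\int_0^\infty\exp\Big(\Big(|s|-\frac{c_2}{\|f\|_{\BMO(\R)}}\Big)\lambda\Big)\,d\lambda .
$$
The remaining integral converges exactly when $|s|\,\|f\|_{\BMO(\R)}<c_2$, in which case it equals $\big(c_2/\|f\|_{\BMO(\R)}-|s|\big)^{-1}$, and hence
$$
\frac1{|I|}\int_I e^{|s|\,|f-f_I|}\,dx\le 1+\frac{c_1\,|s|\,\|f\|_{\BMO(\R)}}{\,c_2-|s|\,\|f\|_{\BMO(\R)}\,}.
$$

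Finally I would choose the constants so that the smallness hypothesis kicks in: set $\alpha=\min\{1/2,\,c_2/2\}$, so that $\alpha<1$, and note that $|s|\le\alpha\|f\|_{\BMO(\R)}^{-1}$ forces $|s|\,\|f\|_{\BMO(\R)}\le\alpha\le c_2/2$, whence the last display yields $\frac1{|I|}\int_I e^{|s|\,|f-f_I|}\,dx\le 1+\frac{c_1\alpha}{c_2-\alpha}$. Taking $\beta:=1+\frac{c_1\alpha}{c_2-\alpha}$ (which is $>1$ because $c_1>0$) and using that $I$ was arbitrary, we get $[e^{sf}]_{A_2(\R)}\le\beta^2$, as claimed. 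I do not expect a genuine obstacle here: the only point that needs care is the convergence of the exponential integral, which is precisely where the hypothesis $|s|\le\alpha\|f\|_{\BMO(\R)}^{-1}$ is used, together with the small amount of bookkeeping needed to guarantee that $\alpha$ and $\beta$ can be taken with $\alpha<1<\beta$ uniformly in $f$ and $s$.
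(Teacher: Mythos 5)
Your proof is correct: the splitting $e^{\pm sf}=e^{\pm sf_I}e^{\pm s(f-f_I)}$, the layer-cake computation, and the John--Nirenberg bound give exactly the uniform estimate $\frac1{|I|}\int_I e^{|s||f-f_I|}\,dx\le 1+\frac{c_1\alpha}{c_2-\alpha}$ needed for $[e^{sf}]_{A_2(\mathbb R)}\le\beta^2$, and your choice $\alpha=\min\{1/2,c_2/2\}$ makes the constants uniform in $f$ and $s$. The paper itself gives no proof of Lemma \ref{A2-bmo} (it is quoted as well known, with references), but your argument is the standard one and is essentially the same John--Nirenberg/layer-cake computation the paper does carry out in the second half of the proof of Lemma \ref{weight-bmo}, so there is nothing to add beyond noting that your handling of the degenerate case $\|f\|_{\BMO(\mathbb R)}=0$ is fine.
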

Here it is perhaps appropriate to mention that the requirement
$$|s|\le {\alpha}{\|f\|^{-1}_{ \BMO(\R)}}$$ is critical since
$$x\mapsto f(x)=\log|x|$$ is in $ \BMO(\R)$ but $$x\mapsto e^{-f(x)}=|x|^{-1}$$ is not a Muckenhoupt weight.

\begin{lem}\label{weight-bmo}
If $$0\le w\in A_\infty(\mathbb R)$$ then
$$
\|\log w\|_{\BMO(\mathbb R)}\le 2\log ([w]_{A_\infty(\mathbb R)}+1).
$$
Conversely, if $$v\in \BMO(\mathbb R)\ \ \&\ \ v\ge 0\ \ \text{a.e. on}\ \ \mathbb R$$
then there exists a sufficiently small $\epsilon_0\in (0,1]$ such that  $$\|v\|_{\BMO(\mathbb R)}<\epsilon_0\Rightarrow
e^v\in A_\infty(\mathbb R)\ \ \text{with}\ \
[e^v]_{A_\infty(\mathbb R)}\le 1+C_4\|v\|_{ \BMO(\R)}.$$
\end{lem}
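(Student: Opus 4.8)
The plan is to treat the two implications separately. The forward one, $\|\log w\|_{\BMO(\mathbb R)}\le 2\log([w]_{A_\infty(\mathbb R)}+1)$, I would read off directly from the definition of $A_\infty(\mathbb R)$ together with Jensen's inequality; the reverse one, $\|v\|_{\BMO(\mathbb R)}<\epsilon_0\Rightarrow[e^v]_{A_\infty(\mathbb R)}\le1+C_4\|v\|_{\BMO(\mathbb R)}$, I would get from the John--Nirenberg inequality (in the form recalled in the proof of Lemma \ref{bmo-comp}) by expanding the exponential as a power series. It is worth noting at the outset why Lemma \ref{A2-bmo} does not already suffice for the reverse implication: it gives $e^v\in A_2(\mathbb R)\subset A_\infty(\mathbb R)$ once $\|v\|_{\BMO(\mathbb R)}$ is small, but only with constant $\le\beta^2$, which does not tend to $1$ as $\|v\|_{\BMO(\mathbb R)}\to0$.

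For the forward implication, I would fix an interval $I\subset\mathbb R$ and set $a=(\log w)_I$. Jensen gives $e^{a}\le w_I$, and the definition of $[w]_{A_\infty(\mathbb R)}$ gives $w_I\le[w]_{A_\infty(\mathbb R)}\,e^{a}$; hence $1\le w_I e^{-a}\le[w]_{A_\infty(\mathbb R)}$. Since $\int_I(\log w-a)\,dx=0$, the positive and negative parts of $\log w-a$ integrate to the same value over $I$, so I would reduce to estimating
\[
\frac{1}{|I|}\int_I|\log w-a|\,dx=\frac{2}{|I|}\int_I(\log w-a)^+\,dx=2\int_0^\infty\frac{|\{x\in I:\ \log w(x)>a+\lambda\}|}{|I|}\,d\lambda.
\]
Chebyshev's inequality and the bound $w_I e^{-a}\le[w]_{A_\infty(\mathbb R)}$ control the level set by $\min(1,\,[w]_{A_\infty(\mathbb R)}e^{-\lambda})$, and integrating this in $\lambda$ (using $[w]_{A_\infty(\mathbb R)}\ge1$) gives a bound of the asserted shape; taking the supremum over $I$ then finishes it. One could alternatively route the level sets through the quantitative reverse H\"older inequality of Lemma \ref{rhi-weight}, but the bare definition of $A_\infty(\mathbb R)$ already suffices.

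For the reverse implication, the observation that drives everything is that the quantity defining $[e^v]_{A_\infty(\mathbb R)}$ on an interval $I$ is precisely
\[
(e^v)_I\,e^{-(\log e^v)_I}=\frac{1}{|I|}\int_I e^{\,v-v_I}\,dx=\sum_{k=0}^{\infty}\frac{1}{k!}\,\frac{1}{|I|}\int_I(v-v_I)^k\,dx,
\]
whose $k=0$ term equals $1$ and whose $k=1$ term vanishes because $v_I$ is the mean of $v$ on $I$. The John--Nirenberg inequality gives $\frac{1}{|I|}\int_I|v-v_I|^k\,dx\le c_1\,k!\,(\|v\|_{\BMO(\mathbb R)}/c_2)^{k}$ for each $k\ge2$, so for $\|v\|_{\BMO(\mathbb R)}<\epsilon_0$ with $\epsilon_0$ small enough that $\|v\|_{\BMO(\mathbb R)}/c_2<\frac12$ and $\epsilon_0\le1$ (this also legitimizes $e^v\in L^1_{\mathrm{loc}}$, since the series then converges absolutely on $I$) the tail $k\ge2$ is dominated by a convergent geometric series and contributes at most $C\|v\|_{\BMO(\mathbb R)}^2\le C\|v\|_{\BMO(\mathbb R)}$. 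Taking the supremum over $I$ yields $[e^v]_{A_\infty(\mathbb R)}\le1+C_4\|v\|_{\BMO(\mathbb R)}$; note that the hypothesis $v\ge0$ is not actually used in this direction.

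The step I expect to be the real obstacle is the forward implication, because $A_\infty(\mathbb R)$ is one-sided---it controls only how far $w$ can lie above its geometric mean---whereas $\BMO(\mathbb R)$ is symmetric. The device that bridges this is the cancellation $\int_I(\log w-(\log w)_I)\,dx=0$, which lets a one-sided level-set estimate govern the full mean oscillation; after that, reaching the precise constant $2\log([w]_{A_\infty(\mathbb R)}+1)$ is just careful bookkeeping with elementary inequalities. The reverse implication, by contrast, is essentially forced once the $A_\infty(\mathbb R)$ functional is written in the form $\sup_I\frac{1}{|I|}\int_I e^{\,v-v_I}\,dx$ and the exponential is expanded.
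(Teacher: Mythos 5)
Your converse direction is fine and is essentially the paper's mechanism: both arguments start from $[e^v]_{A_\infty(\mathbb R)}=\sup_I |I|^{-1}\int_I e^{v-v_I}\,dx$ and feed in the John--Nirenberg inequality; the paper integrates the John--Nirenberg tail of $e^{v-v_I}$ on $\{v\ge v_I\}$ directly, while you expand the exponential and use the moment bounds $|I|^{-1}\int_I|v-v_I|^k\,dx\le c_1k!(\|v\|_{\mathrm{BMO}(\mathbb R)}/c_2)^k$. Your variant is legitimate (and the vanishing $k=1$ term even gives $1+O(\|v\|^2_{\mathrm{BMO}(\mathbb R)})$, slightly better than needed), and you are right both that $v\ge0$ is never used and that Lemma \ref{A2-bmo} alone would not give a constant tending to $1$.

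The gap is in the forward direction. After the reduction $|I|^{-1}\int_I|\log w-a|\,dx=2|I|^{-1}\int_I(\log w-a)^+\,dx$ (identical to the paper), your Chebyshev-plus-layer-cake step only retains the level-set bound $|\{x\in I:\log w(x)>a+\lambda\}|/|I|\le\min\bigl(1,[w]_{A_\infty(\mathbb R)}e^{-\lambda}\bigr)$, and integrating this in $\lambda$ yields $2\bigl(1+\log[w]_{A_\infty(\mathbb R)}\bigr)$, which is strictly larger than the asserted $2\log\bigl([w]_{A_\infty(\mathbb R)}+1\bigr)$ for every $[w]_{A_\infty(\mathbb R)}\ge1$ (at $[w]_{A_\infty(\mathbb R)}=1$ it gives $2$ versus $2\log 2$). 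Nor is this repairable by ``careful bookkeeping'' along that route: the function $\min(1,[w]_{A_\infty(\mathbb R)}e^{-\lambda})$ integrates to exactly $1+\log[w]_{A_\infty(\mathbb R)}$, so once the integral constraint has been reduced to that pointwise distributional bound, the stated constant is out of reach. The paper gets it in one stroke by applying Jensen to the positive part and using $e^{t^+}\le e^t+1$: namely $|I|^{-1}\int_I(\log w-a)^+\,dx\le\log\bigl(|I|^{-1}\int_I e^{(\log w-a)^+}\,dx\bigr)\le\log\bigl(|I|^{-1}\int_I e^{\log w-a}\,dx+1\bigr)\le\log\bigl([w]_{A_\infty(\mathbb R)}+1\bigr)$. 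You invoke Jensen in your opening sentence but only use it for $e^a\le w_I$; replacing your level-set computation by this Jensen step is the missing (small) idea that proves the inequality as stated.
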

\begin{proof}
	On the one hand, for any $0\le w\in A_\infty(\mathbb R)$ we have
	\begin{align*}
		\int_I \left|\log w-(\log w)_I\right|\,dx&=\int_I \left[\log w-(\log w)_I\right]_+\,dx+\int_I \left[\log w-(\log w)_I\right]_-\,dx\\
		&=2 \int_I \left[\log w-(\log w)_I\right]_+\,dx,
	\end{align*}
	where $[f]_+$ and $[f]_-$ denotes the positive and negative parts of $f$ respectively.
	In virtue of Jensen's inequality we obtain
	\begin{align*}
		|I|^{-1}\int_I \left|\log w-(\log w)_I\right|\,dx
		&=2 |I|^{-1}\int_I \left[\log w-(\log w)_I\right]_+\,dx\\
		&\le 2\log\left(|I|^{-1} \int_I \exp\left[\log w-(\log w)_I\right]_+\,dx\right) \\
		&\le 2\log \left(|I|^{-1}\int_I \exp\left[\log w-(\log w)_I\right]\,dx+1\right)\\
		&\le 2\log ([w]_{A_\infty(\mathbb R)}+1),
	\end{align*}
	whence
	$$
	\|\log w\|_{\BMO(\mathbb R)}\le 2\log ([w]_{A_\infty(\mathbb R)}+1).
	$$
	
	On the other hand, note that
\begin{align}\label{est-a}
[e^v]_{A_\infty(\mathbb R)}=\left(\sup_{I=(a,b)\subset \R}|I|^{-1}\int_I e^{v(x)}\,dx\right) \exp\left([-v]_I\right)=
\sup_{I=(a,b)\subset \R}|I|^{-1}\int_I e^{v(x)-v_I}\,dx.
\end{align}
So, if $v\in  \BMO(\R)$, then the John-Nirenberg inequality gives
$$|\{x\in I:\,|v(x)-v_I|>\lambda\}|\le c_1|I|\exp\left(-\frac{c_2\lambda}{\|v\|_{\BMO(\mathbb R)}}\right).$$
Inserting this into \eqref{est-a}, we find that if
$$\|v\|_{\BMO(\mathbb R)}<c_2$$ then
\begin{align*}
|I|^{-1}\int_I e^{v(x)-v_I}\,dx&=\frac{1}{|I|}\int_{x\in I:\, v(x)-v_I<0} e^{v(x)-v_I}\,dx+
\frac{1}{|I|}\int_{x\in I:\, v(x)-v_I\ge 0} e^{v(x)-v_I}\,dx \\
&\le 1+ c_1\int_{0}^\infty \exp\left(\lambda-\frac{c_2\lambda}{\|v\|_{\BMO(\mathbb R)}}\right)\,d\lambda \\
&\le 1+\frac{c_1\|v\|_{\BMO(\mathbb R)}}{c_2-\|v\|_{\BMO(\mathbb R)}}.
\end{align*}
Accordingly,
$$\|v\|_{\BMO(\mathbb R)}<2^{-1}c_2\Rightarrow
[e^v]_{A_\infty(\mathbb R)}\le 1+{2c_1c_2^{-1}\|v\|_{\BMO(\mathbb R)}}.$$
Letting $$\epsilon_0=\min\{1,2^{-1}c_2\}$$ yields the assertion.
\end{proof}

\begin{prop}\label{cont-est}
Suppose that $b\in L^1_\loc(\R)$ has its derivative $b'\in  \BMO(\R)$. Then $b$ satisfies the Zygmund
condition with
\begin{equation*}\label{modulus-cont-1}
{|b(x+y)+b(x-y)-2b(x)|}\le 2|y|\|b'\|_{ \BMO(\R)}\ \ \forall\ \ (x,y)\in\mathbb R\times\mathbb R.
\end{equation*}
\end{prop}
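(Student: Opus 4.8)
The plan is to express the second difference appearing in the Zygmund condition as $|y|$ times the difference of the integral averages of $b'$ over two adjacent intervals of length $|y|$, and then to control that difference by $\|b'\|_{\BMO(\R)}$ using only the defining inequality of BMO. First I would observe that, since $\BMO(\R)\subset L^1_\loc(\R)$, the distributional derivative $b'$ is locally integrable, so $b$ agrees almost everywhere with a locally absolutely continuous primitive of $b'$; all pointwise statements below refer to this representative, for which the fundamental theorem of calculus holds. As the quantity $b(x+y)+b(x-y)-2b(x)$ is invariant under $y\mapsto-y$ and vanishes for $y=0$, it suffices to treat $y>0$.

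Next, for $y>0$ I would write
$$
b(x+y)+b(x-y)-2b(x)=\big(b(x+y)-b(x)\big)-\big(b(x)-b(x-y)\big)=\int_x^{x+y}b'(t)\,dt-\int_{x-y}^{x}b'(t)\,dt .
$$
Set $I_+=(x,x+y)$ and $I_-=(x-y,x)$, each of length $y$, and $I=(x-y,x+y)=I_-\cup I_+$, of length $2y$; then the right-hand side equals $y\,(b'_{I_+}-b'_{I_-})$. Since the contributions of the constant $b'_I$ over $I_+$ and over $I_-$ cancel,
$$
b'_{I_+}-b'_{I_-}=\frac1y\left(\int_{I_+}(b'-b'_I)\,dt-\int_{I_-}(b'-b'_I)\,dt\right),
$$
and, as $I_+$ and $I_-$ partition $I$ up to one point,
$$
\big|b'_{I_+}-b'_{I_-}\big|\le\frac1y\int_I\big|b'-b'_I\big|\,dt\le\frac{|I|}{y}\,\|b'\|_{\BMO(\R)}=2\|b'\|_{\BMO(\R)} .
$$
Combining these identities and estimates gives $|b(x+y)+b(x-y)-2b(x)|=y\,|b'_{I_+}-b'_{I_-}|\le 2y\,\|b'\|_{\BMO(\R)}$, and replacing $y$ by $|y|$ yields the claim for all $(x,y)\in\mathbb R\times\mathbb R$.

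I do not expect a genuine obstacle here: the computation is short. The two points that deserve a word of care are the passage from the distributional hypothesis $b'\in\BMO(\R)$ to the pointwise identities above — handled by working with the absolutely continuous representative of $b$ — and the bookkeeping that one should compare $b'_{I_+}$ and $b'_{I_-}$ to the average $b'_I$ over the \emph{large} interval $I$, rather than to each other directly, so that the cross terms cancel and the estimate costs only the factor $|I|/|I_\pm|=2$; this is precisely what produces the sharp constant $2$ in the statement.
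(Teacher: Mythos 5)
Your argument is correct and is essentially the paper's own proof: both write $b(x+y)+b(x-y)-2b(x)$ as the difference of the integrals of $b'$ over the two half-intervals $(x,x+y)$ and $(x-y,x)$ and compare each to the mean of $b'$ over the full interval $I=(x-y,x+y)$ (your subtraction of $b'_I$ is the same algebra as the paper's insertion of $\frac12\int_I b'$), so the oscillation over $I$ yields exactly the constant $2$. Your extra remark about working with the locally absolutely continuous representative of $b$ is a harmless refinement the paper leaves implicit.
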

\begin{proof} This follows from
\begin{align*}
&|b(x+y)+b(x-y)-2b(x)|\\
&\quad=\left|\int_x^{x+y} b'(z)\,dz-\int_{x-y}^xb'(z)\,dz\right|\\
&\quad\le \left|\int_x^{x+y} b'(z)\,dz-\frac 12\int_{x-y}^{x+y} b'(z)\,dz\right|+\left|\frac 12\int_{x-y}^{x+y} b'(z)\,dz-\int_{x-y}^xb'(z)\,dz\right|\\
&\quad \le \int_x^{x+y} \left|b'(z)-b'_{[x-y,x+y]}\right|\,dz+\int_{x-y}^{x} \left|b'(z)-b'_{[x-y,x+y]}\right|\,dz\\
&\quad\le \int_{x-y}^{x+y} \left|b'(z)-b'_{[x-y,x+y]}\right|\,dz\\
&\quad \le 2|y|\|b'\|_{ \BMO(\R)}.
\end{align*}
\end{proof}

Recall that for a $ \BMO(\R)$ function $f$ we have
$$
\|f\|_\ast=\|f\|_{ \BMO(\R)}+\int_{[-1,1]}|f|\,dx<\infty.
$$
In what follows, for a positive constant $C$, denote by $$\log^+ C=\max\{1,\,\log C\}.$$
\begin{prop}\label{cont-est-2}
Suppose that $b\in L^1_\loc(\R)$ has its derivative $b'\in  \BMO(\R)$. Then $b$ satisfies
$$|b(x)-b(0)|\le C_5\|b'\|_\ast|x|(1+|\log|x||)\ \ \forall\ \ x\in\R$$
and
$$|b(x+h)-b(x)|\le C_5\|b'\|_\ast(\log^+|x|)\left( |h| (1+|\log|h||)\right)\ \ \forall\ \ (x,h)\in\R\times \R.$$
\end{prop}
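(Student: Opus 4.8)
The plan is to exploit the classical slow (logarithmic) growth of the mean values of a $\BMO(\R)$ function: if $I\subset J$ are intervals in $\R$, then
\[
|f_I-f_J|\ls\bigl(1+\log\tfrac{|J|}{|I|}\bigr)\|f\|_{\BMO(\R)},
\]
which follows by inserting a chain $I=I_0\subset I_1\subset\cdots\subset I_m=J$ of intervals contained in $J$ with $|I_{k+1}|\le 2|I_k|$ and $m=\lceil\log_2(|J|/|I|)\rceil$, and telescoping using $|f_{I_k}-f_{I_{k+1}}|\le\frac{|I_{k+1}|}{|I_k|}\|f\|_{\BMO(\R)}$. Since for $p<q$ one has $b(q)-b(p)=\int_p^q b'=(q-p)(b')_{[p,q]}$, both claimed inequalities reduce to estimating $|(b')_I|$ for a suitable interval $I$, and the extra term in $\|b'\|_\ast$ enters only through the normalization $|(b')_{[-1,1]}|\le\frac12\|b'\|_\ast$.

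For the first inequality I would take $I=I_x$, the interval with endpoints $0$ and $x$ (so $|I_x|=|x|$), and let $K_x$ be the smallest interval containing $I_x\cup[-1,1]$; since $0$ lies in both, $K_x$ is their union and $|K_x|=1+\max\{|x|,1\}\le|x|+2$. Then I would split
\[
|(b')_{I_x}|\le|(b')_{[-1,1]}|+|(b')_{[-1,1]}-(b')_{K_x}|+|(b')_{K_x}-(b')_{I_x}|,
\]
bound the first term by $\frac12\|b'\|_\ast$ and the last two by the chain estimate (with $|K_x|/2\le 1+|x|/2$ and $|K_x|/|I_x|\le 1+2/|x|$), and invoke the elementary bounds $\log(1+|x|/2)\ls 1+|\log|x||$ and $\log(1+2/|x|)\ls 1+|\log|x||$ to get $|(b')_{I_x}|\ls\|b'\|_\ast(1+|\log|x||)$; multiplying by $|x|$ finishes this part.

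For the second inequality I would take $I=J$, the interval with endpoints $x$ and $x+h$ (so $|J|=|h|$), and let $K$ be the smallest interval containing $J\cup[-1,1]$; here $|K|\le 2(|x|+|h|+1)$. The same three-term split and the chain estimate (now with $|K|/2\le|x|+|h|+1$ and $|K|/|J|\le 2(|x|+|h|+1)/|h|$) give
\[
|(b')_J|\ls\|b'\|_\ast\bigl(1+\log(|x|+|h|+1)+\log\tfrac{|x|+|h|+1}{|h|}\bigr).
\]
It then remains to absorb the right-hand side into $\log^+|x|\,(1+|\log|h||)$: using $\log^+|x|\ge 1$, $1+\log(1+|x|)\ls\log^+|x|$, $\log^+|h|\le 1+|\log|h||$, and $\log^+|h|-\log|h|\le 1+|\log|h||$, one checks both $\log(|x|+|h|+1)\ls\log^+|x|+1+|\log|h||$ and $\log\frac{|x|+|h|+1}{|h|}\ls\log^+|x|+1+|\log|h||$, whence $|(b')_J|\ls\|b'\|_\ast\log^+|x|(1+|\log|h||)$ and, after multiplying by $|h|$, the second inequality. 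The degenerate cases $x=0$ or $h=0$ are trivial since both sides then vanish.

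I expect the only genuinely delicate point to be this last bookkeeping: checking, uniformly over the regimes $|x|\le 1$ versus $|x|>1$, $|h|\le 1$ versus $|h|>1$, and $|h|\le|x|$ versus $|h|>|x|$, that the several logarithmic factors produced by comparing mean values really do collapse into the single product $\log^+|x|\,(1+|\log|h||)$ with an absolute constant; everything else is routine.
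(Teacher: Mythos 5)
Your argument is correct, but it follows a different route from the paper. The paper never estimates interval averages of $b'$ directly: it first proves the Zygmund bound (Proposition \ref{cont-est}) and then invokes Reimann's inequality \cite[Proposition 5]{R2}, which converts the Zygmund condition into a comparison of the difference quotients $\frac{b(x+y)-b(x)}{y}$ and $\frac{b(x+z)-b(x)}{z}$ at two scales up to a $\left|\log\frac{|y|}{|z|}\right|$ error; choosing $z=1$ (with base point $0$ for the first inequality and base point $x$ for the second, where $|b(x+1)-b(x)|$ is bounded by $2(\log^+|x|)\|b'\|_\ast$ via the logarithmic growth of BMO averages) yields both estimates. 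You instead write $b(q)-b(p)=(q-p)(b')_{[p,q]}$ and control $|(b')_I|$ by chaining averages against the fixed interval $[-1,1]$, using the standard bound $|f_I-f_J|\lesssim\bigl(1+\log\frac{|J|}{|I|}\bigr)\|f\|_{\BMO(\R)}$ for $I\subset J$ and $|(b')_{[-1,1]}|\le\frac12\|b'\|_\ast$; your final bookkeeping with $\log^+|x|\ge 1$ does collapse the logarithms correctly in all regimes. In substance both proofs rest on the same logarithmic-doubling phenomenon (Reimann's proposition is itself proved by a dyadic telescoping of difference quotients), but yours is self-contained at the level of $b'\in\BMO(\R)$ and avoids the detour through the Zygmund condition and the external citation, at the cost of the explicit case analysis; the paper's version reuses machinery it needs anyway and produces explicit constants such as $5\|b'\|_{\BMO(\R)}+\frac{\|b'\|_{\BMO(\R)}}{\log 2}\left|\log\frac{|y|}{|z|}\right|$. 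One cosmetic slip: for $x=0$ the right-hand side of the second inequality does not vanish (the paper's convention gives $\log^+|0|=1$), but your general argument covers that case without modification, so nothing is lost.
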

\begin{proof}
From \cite[Proposition 5]{R2} and Proposition \ref{cont-est} it follows that if
$$
y\neq 0;\ z\neq 0;\ x\in\R,
$$
then
\begin{equation}\label{zygmund}
\left|\frac{(y,b(x+y)-b(x))}{|y|^2}-\frac{(z,b(x+z)-b(x))}{|z|^2}\right|\le 5\|b'\|_{ \BMO(\R)}+\frac{\|b'\|_{ \BMO(\R)}}{\log 2}\left|\log\frac{|y|}{|z|}\right|.
\end{equation}
Letting $x=0$ and $z=1$ in \eqref{zygmund} gives the first inequality in Proposition \ref{cont-est-2} via
\begin{align*}
\left|b(y)-b(0)\right|&\le |y|\left(|b(1)-b(0)|+ 5\|b'\|_{ \BMO(\R)}+\frac{\|b'\|_{ \BMO(\R)}}{\log 2}\left|\log|y|\right|\right)\\
&\le C_5\|b'\|_\ast |y|(1+|\log|y|).
\end{align*}
Also, by using  structure of $\BMO(\R)$ (cf. \cite{Gra04}) we see that if $x\in\R$ then
\begin{eqnarray*}
|b(x+1)-b(x)|&&=\left|\int_x^{x+1}b'\,dy-\int_0^1b'\,dy\right|+\left|\int_0^1b'\,dy\right|\\
&&\le 2(\log^+|x|)\|b'\|_{ \BMO(\R)}+\left|\int_0^1b'\,dy\right|\\
&&\le 2(\log^+|x|)\|b'\|_\ast.
\end{eqnarray*}
This, along with \eqref{zygmund}, derives the second inequality in Proposition \ref{cont-est-2} via
\begin{eqnarray*}
\left|b(x+h)-b(x)\right|&&\le |h|\left(|b(x+1)-b(x)|+ 5\|b'\|_{ \BMO(\R)}+\frac{\|b'\|_{ \BMO(\R)}}{\log 2}\left|\log|h|\right|\right)\\
&&\le C_5\|b'\|_\ast(\log^+|x|) |h|(1+|\log|h|).
\end{eqnarray*}
\end{proof}

\section{\large Key a priori estimates for the flow}\label{s3}

We say that $\phi$ is a forward flow associated to $b$ if for each $s\in [0,T]$ and almost every $x\in\rn$  the map
$$t\mapsto\,|b(t, \phi_s(t, x))|\ \ \text{belongs to}\ \ L^1(s,T)
$$
and
$$\phi_s(t,x)= x+\int_s^t b(r,\phi_s(r,x))\,dr.$$
If the flow starts at $s=0$, then we simply denote $\phi_0(t,x)$ by $\phi(t,x)$.

Meanwhile, we say that $\tilde \phi$ is a backward flow associated to $b$ if for each $t\in [0,T]$ and almost every $x\in\rn$  the map
$$s\mapsto\,|b(s, \tilde \phi_t(s,x))|\ \ \text{belongs to}\ \ L^1(0,t)
$$ and
$$\tilde \phi_t(s, x) = x -\int_s^t b(r,\tilde \phi_t(r,x))\,dr.$$

\begin{thm}\label{pri-flow}
Let $$b(t,x):\,[0,T]\times\rr\mapsto \rr\ \ \text{be in}\ \ L^1(0,T;C^1(\rr))\ \ \text{with}\ \
\int_0^T\Bigg\|\frac{\,\partial b(t,\cdot)}{\,\partial x}\Bigg\|_{L^\infty(\rr)}\,dt<\infty.
$$
Then there exists a unique flow $\phi(t,x)$ satisfying
$$
\begin{cases}
\dfrac{\partial}{\partial t}\phi(t,x)=b(t,\phi(t,x))\ \ &\forall\ \ (t,x)\in [0,T]\times\R;\\
\phi_0(x)=x\ \ &\forall\ \ x\in\R.
\end{cases}
$$
Moreover, for each $t\in [0,T]$, it holds that
$$
\left\| \log\Big|\dfrac{\partial}{\partial x}\phi(t,x)\Big|\right\|_{\BMO(\mathbb R)}
\le \frac{\int_{0}^t C_6\left\|\dfrac{\partial}{\partial x}b(s,)\right\|_{\BMO(\mathbb R)} \,ds}{ \exp\left(-C_7\int_{0}^t\left\|\dfrac{\partial}{\partial x}b(s,)\right\|_{\BMO(\mathbb R)}\,ds\right)}.
$$
\end{thm}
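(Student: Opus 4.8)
I would run the argument in three stages.

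\emph{Stage 1 (Cauchy--Lipschitz set-up).} Because $b(t,\cdot)\in C^1(\R)$ with $\int_0^T\|\frac{\partial}{\partial x}b(t,\cdot)\|_{L^\infty(\R)}\,dt<\infty$, Carath\'eodory's theory gives a unique two-parameter flow $\phi_s(u,x)$, $0\le s\le u\le T$, with $\phi_s(s,x)=x$, $\partial_u\phi_s=b(u,\phi_s)$ and the group law $\phi_s(u,\cdot)=\phi_r(u,\cdot)\circ\phi_s(r,\cdot)$, where $\phi(t,\cdot)=\phi_0(t,\cdot)$; each $\phi_s(u,\cdot)$ is an increasing $C^1$-diffeomorphism of $\R$ with
$$\partial_x\phi_s(u,x)=\exp\Big(\int_s^u(\partial_x b)(r,\phi_s(r,x))\,dr\Big)>0.$$
Writing $G_{s,u}:=\log\partial_x\phi_s(u,\cdot)=\int_s^u(\partial_x b)(r,\cdot)\circ\phi_s(r,\cdot)\,dr$ and using $(\varphi^{-1})'=1/(\varphi'\circ\varphi^{-1})$ one also gets $\log(\phi_s(u,\cdot)^{-1})'=-G_{s,u}\circ\phi_s(u,\cdot)^{-1}$; in particular $\partial_x\phi_s(u,\cdot),(\phi_s(u,\cdot)^{-1})'\in A_\infty(\R)$ a priori. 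Existence and uniqueness are then settled, and it remains to bound $\|G_{0,t}\|_{\BMO(\R)}$ by $M(t):=\int_0^t\|(\partial_x b)(s,\cdot)\|_{\BMO(\R)}\,ds$.

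\emph{Stage 2 (short-time a priori bound).} I would isolate the following claim: there is a $\delta_0>0$ depending only on the constants $C_3,C_4,\epsilon_0$ of Lemmas \ref{bmo-comp} and \ref{weight-bmo} such that if $[s,u]\subseteq[0,T]$ and $\int_s^u\|(\partial_x b)(r,\cdot)\|_{\BMO(\R)}\,dr\le\delta_0$, then $\|G_{s,u}\|_{\BMO(\R)}\le 2C_3\int_s^u\|(\partial_x b)(r,\cdot)\|_{\BMO(\R)}\,dr$ and $[\partial_x\phi_s(u,\cdot)]_{A_\infty(\R)},\,[(\phi_s(u,\cdot)^{-1})']_{A_\infty(\R)}\le 2$. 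To prove it I would fix $s$ and bootstrap in the upper endpoint, controlling simultaneously $p(u'):=\|G_{s,u'}\|_{\BMO(\R)}$ and $q(u'):=\|\log(\phi_s(u',\cdot)^{-1})'\|_{\BMO(\R)}$, which vanish at $u'=s$ and are continuous in $u'$ (the $L^\infty$ hypothesis makes the flows uniformly bi-Lipschitz on $[s,u]$). As long as $p\le\eta_1$ and $q\le\eta_2$ with $\eta_2=4C_3\eta_1$ and $\eta_1$ a small absolute constant, Lemma \ref{weight-bmo} gives $[(\phi_s(r,\cdot)^{-1})']_{A_\infty(\R)}\le1+C_4q(r)\le2$ and $[\partial_x\phi_s(r,\cdot)]_{A_\infty(\R)}\le1+C_4p(r)\le2$, whence, by Minkowski's inequality in $\BMO(\R)$ and Lemma \ref{bmo-comp},
$$p(u')\le\int_s^{u'}\big\|(\partial_x b)(r,\cdot)\circ\phi_s(r,\cdot)\big\|_{\BMO(\R)}dr\le C_3\!\int_s^{u'}\!\!\big[(\phi_s(r,\cdot)^{-1})'\big]_{A_\infty(\R)}\|(\partial_x b)(r,\cdot)\|_{\BMO(\R)}dr\le2C_3\delta_0,$$
and similarly $q(u')\le C_3[\partial_x\phi_s(u',\cdot)]_{A_\infty(\R)}\,p(u')\le2C_3\,p(u')$ using $\log(\phi_s(u',\cdot)^{-1})'=-G_{s,u'}\circ\phi_s(u',\cdot)^{-1}$. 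Choosing $\eta_1$ small enough ($\eta_1\le\min\{\epsilon_0,1/C_4\}$ and $4C_3\eta_1\le\min\{\epsilon_0,1/C_4\}$) and then $\delta_0:=\eta_1/(4C_3)$ forces $p\le\eta_1/2$, $q\le\eta_2/2$, closing the bootstrap.

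\emph{Stage 3 (partition and chain rule).} Given $t\in[0,T]$ I would pick $0=t_0<\dots<t_N=t$ with $\int_{t_{k-1}}^{t_k}\|(\partial_x b)(r,\cdot)\|_{\BMO(\R)}\,dr\le\delta_0$ and $N\le M(t)/\delta_0+1$ (possible since $\tau\mapsto\int_0^\tau\|(\partial_x b)(r,\cdot)\|_{\BMO(\R)}\,dr$ is continuous nondecreasing). The chain rule gives, with $\phi_0(t_0,\cdot)=\mathrm{id}$,
$$\log\partial_x\phi(t,\cdot)=G_{0,t}=\sum_{k=1}^N G_{t_{k-1},t_k}\circ\phi_0(t_{k-1},\cdot).$$
For each $k$, factor $\phi_0(t_{k-1},\cdot)=\phi_{t_{k-2}}(t_{k-1},\cdot)\circ\dots\circ\phi_{t_1}(t_2,\cdot)\circ\phi_0(t_1,\cdot)$ and peel these $k-1$ short factors off one at a time from the inside: each step applies Lemma \ref{bmo-comp} with $\varphi^{-1}$ equal to a single short flow $\phi_{t_{j-1}}(t_j,\cdot)$, costing a factor $C_3[(\phi_{t_{j-1}}(t_j,\cdot)^{-1})']_{A_\infty(\R)}\le2C_3$ by Stage 2 --- crucially, only \emph{single-piece} inverse flows are ever fed to Lemma \ref{bmo-comp}, which is what keeps the bound from becoming doubly exponential. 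This yields $\|G_{t_{k-1},t_k}\circ\phi_0(t_{k-1},\cdot)\|_{\BMO(\R)}\le(2C_3)^{k-1}\|G_{t_{k-1},t_k}\|_{\BMO(\R)}\le(2C_3)^k\int_{t_{k-1}}^{t_k}\|(\partial_x b)(r,\cdot)\|_{\BMO(\R)}\,dr$, and summing (we may assume $C_3\ge1$; otherwise the estimate is trivial)
$$\big\|\log\partial_x\phi(t,\cdot)\big\|_{\BMO(\R)}\le\sum_{k=1}^N(2C_3)^k\!\int_{t_{k-1}}^{t_k}\!\|(\partial_x b)(r,\cdot)\|_{\BMO(\R)}dr\le(2C_3)^N M(t)\le2C_3\,e^{(\log2C_3)M(t)/\delta_0}M(t),$$
which is the asserted estimate with $C_6=2C_3$ and $C_7=(\log2C_3)/\delta_0$.

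\emph{Main obstacle.} The hard part is Stage 2. There is no bound on $[(\phi_s(u,\cdot)^{-1})']_{A_\infty(\R)}$ in terms of $\|G_{s,u}\|_{\BMO(\R)}$ alone once that norm is large, so $G_{s,u}$ and $\log(\phi_s(u,\cdot)^{-1})'$ must be bootstrapped together: Lemma \ref{weight-bmo} converts a small $\BMO$ norm into an $A_\infty(\R)$ constant close to $1$, and Lemma \ref{bmo-comp} trades one of these norms for the other at the cost of the fixed factor $C_3$, which is exactly why the linking constant in $\eta_2=4C_3\eta_1$ is needed. Proving that $p$ and $q$ are genuinely continuous in $u'$ --- so the bootstrap set is open and closed --- is where the $L^\infty$ hypothesis is used; it never enters the final estimate, which depends only on $\|\partial_x b\|_{\BMO(\R)}$, in accordance with the criticality noted after Theorem \ref{main}.
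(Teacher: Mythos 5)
Your proposal is correct and follows essentially the same route as the paper: the crude Cauchy--Lipschitz $A_\infty$ bound, a short-time regime in which Lemma \ref{weight-bmo} converts BMO-smallness into an $A_\infty$ constant near $1$ and Lemma \ref{bmo-comp} is only ever applied with single short flows, and then iteration over a partition of $[0,t]$ calibrated by $\int\|\partial_x b\|_{\BMO(\mathbb R)}$ --- the paper organizing the short-time step via Gronwall plus an explicit continuation (its Steps 2--3) and the long-time step via a recursive composition inequality (its Step 4), where you use an open--closed bootstrap and a telescoping sum, which amounts to the same estimate. The only soft spot is the asserted continuity of $q(u')=\|\log\,(\phi_s(u',\cdot)^{-1})'\|_{\BMO(\mathbb R)}$ in $u'$, which is not obvious; but it is harmless, since $q(r)\le C_3\,[\partial_x\phi_s(r,\cdot)]_{A_\infty(\mathbb R)}\,p(r)\le 2C_3\,p(r)$ as soon as $p(r)\le\eta_1$, so the bootstrap can be run on the (easily continuous) quantity $p$ alone.
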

\begin{proof} The argument is divided into four steps.
	\medskip
	
\noindent{\bf Step 1 - initialing argument}.
Since $$b(t,x):\,[0,T]\times\rr\mapsto \rr$$ satisfies $$b\in L^1(0,T;C^1(\rr))\ \ \text{with}\ \
\frac{\,\partial b(t,x)}{\,\partial x}\in L^1(0,T;L^\infty(\rr)),$$
the classical Cauchy-Lipschitz theory produces a unique flow $\phi_s(t,x)$ with
$$
\begin{cases}
\dfrac{\partial}{\partial t}\, \phi_s(t,x)=b(t,\phi_s(t, x))\ \ &\forall\ \ (t,x)\in [s,T]\times\R;\\
\phi_s(s,x)=x\ \ &\forall\ \ x\in\R.
\end{cases}
$$
Moreover, for each $t\in [s,T]$, $\phi_s(t,\cdot)$ is a bi-Lipschitz map on $\rr$.
Differentiating  the equation with respect to the spatial direction, we have
$$
\begin{cases}
\dfrac{\partial}{\partial x}\left(\dfrac{\partial}{\partial t}\, \phi_s(t,x)\right)=\left(\dfrac{\partial}{\partial x}b(t,\phi_s(t, x))\right)\dfrac{\partial}{\partial x}\, \phi_s(t,x);\\
\dfrac{\partial}{\partial t}\log\Big|\dfrac{\partial}{\partial x}\phi_s(t,x)\Big|=\dfrac{\partial}{\partial x}b(t,\phi_s(t, x)).
\end{cases}
$$

As $\phi_s(t,\cdot)$ is a bi-Lipschitz map on $\rr$ for each $t\in [s,T]$, its $x$-derivative has lower and upper bounds, i.e.,
$$e^{-\int_s^t A(r)\,dr}\le \left|\dfrac{\partial}{\partial x}\phi_s(t,x)\right|\le e^{\int_s^t A(r)\,dr},$$
where $$
A(r)=\Bigg\|\dfrac{\partial}{\partial x}b(r,\cdot)\Bigg\|_{L^\infty(\R)}.
$$
In particular, this implies that for each $t$, the function $$\bigg|\dfrac{\partial}{\partial x}\phi_s(t,\cdot)\bigg|$$ is an $A_\infty(\mathbb R)$-weight with
$$\left[\Big|\dfrac{\partial}{\partial x}\phi_s(t,\cdot)\Big|\right]_{A_\infty(\mathbb R)}\le e^{2\int_s^t A(r)\,dr}.$$
Note that the same estimate holds for the backward flow $\tilde \phi_t(s,x)$, which is the inverse of $\phi_s(t,x)$.

Upon applying Lemma \ref{bmo-comp}, we achieve
\begin{align}\label{est-log-bmo}
&\left\|\log\Big|\dfrac{\partial}{\partial x}\phi_s(t,\cdot)\Big|\right\|_{\BMO(\mathbb R)}\nonumber\\
&\ \ =\left\|\int_s^t\dfrac{\partial}{\partial x}b(r,\phi_s(r,\cdot))\,dr\right\|_{\BMO(\mathbb R)}\nonumber\\
&\ \ \le \int_s^t \left\|\dfrac{\partial}{\partial x}b(r,\phi_s(r,\cdot))\right\|_{\BMO(\mathbb R)}\,dr\\
&\ \ \le \int_s^t C_3\left\|\dfrac{\partial}{\partial x}b(r,\cdot)\right\|_{\BMO(\mathbb R)} \left[\dfrac{\partial}{\partial x} \tilde\phi_r(s,\cdot)\right]_{A_\infty(\mathbb R)}\,dr\nonumber\\
&\ \ \le \int_s^t C_3\left\|\dfrac{\partial}{\partial x}b(r,\cdot)\right\|_{\BMO(\mathbb R)} e^{2\int_s^r A(z)\,dz}\,dr.\nonumber
\end{align}

\noindent{\bf Step 2 - starting from short time}.
By letting $T_0>s\ge 0$ be small enough with
\begin{equation*}\label{time-lipschitz}
\int_s^{T_0} C_3\left\|\dfrac{\partial}{\partial x}b(r,\cdot)\right\|_{\BMO(\mathbb R)} e^{2\int_s^{r} A(z)\,dz}\,dr<\epsilon_0,
\end{equation*}
 where $\epsilon_0$ is as in Lemma \ref{weight-bmo}, we utilize \eqref{est-log-bmo} to get
\begin{align*}
\sup_{s\le t\le T_0}\left\{\left\|\log\Big|\dfrac{\partial}{\partial x}\phi_s(t,\cdot)\Big|\right\|_{\BMO(\mathbb R)},\,
\left\|\log\Big|\dfrac{\partial}{\partial x}\tilde\phi_t(s,\cdot)\Big|\right\|_{\BMO(\mathbb R)}\right\} <\epsilon_0.
\end{align*}
Hence, by applying Lemma \ref{weight-bmo}, we see
$$\left[\Big|\dfrac{\partial}{\partial x}\phi_s(t,\cdot)\Big|\right]_{A_\infty(\mathbb R)}<1+C_4\left\|\log\Big|\dfrac{\partial}{\partial x}\phi_s(t,\cdot)\Big|\right\|_{\BMO(\mathbb R)}.$$
Inserting this estimate into \eqref{est-log-bmo}, we conclude
\begin{align*}\label{est-log-bmo2}
\left\|\log\Big|\dfrac{\partial}{\partial x}\phi_s(t,\cdot)\Big|\right\|_{\BMO(\mathbb R)}&\le \int_s^t C_3\left\|\dfrac{\partial}{\partial x}b(r,\cdot)\right\|_{\BMO(\mathbb R)} \left[\Big|\dfrac{\partial}{\partial x} \tilde\phi_r(s, \cdot)\Big|\right]_{A_\infty(\mathbb R)}\,ds\nonumber\\
&\le \int_s^t C_3\left\|\dfrac{\partial}{\partial x}b(r,\cdot)\right\|_{\BMO(\mathbb R)} \left(1+C_4\left\|\log\Big|\dfrac{\partial}{\partial x}\tilde\phi_r(s,\cdot)\Big|\right\|_{\BMO(\mathbb R)}\right)\,ds.
\end{align*}
Set
$$
I_s(t)=\sup_{s\le r\le t}\left\{\left\|\log\Big|\dfrac{\partial}{\partial x}\phi_s(r,\cdot)\Big|\right\|_{\BMO(\mathbb R)},\,
\left\|\log\Big|\dfrac{\partial}{\partial x}\tilde\phi_r(s,\cdot)\Big|\right\|_{\BMO(\mathbb R)}\right\}.
$$
The above estimates yield
$$
I_s(t)\le \int_s^t C_3\left\|\dfrac{\partial}{\partial x}b(r,\cdot)\right\|_{\BMO(\mathbb R)} \left(1+C_4I_s(r)\right)\,dr\ \forall\ t\in [s,T_0].
$$
The Gronwall inequality then implies
\begin{equation}\label{est-log-bmo4}
I_s(t)\le \frac{\int_s^t C_3\left\|\dfrac{\partial}{\partial x}b(r,\cdot)\right\|_{\BMO(\mathbb R)}\,dr}{ \exp\left(-\int_s^t C_3C_4\left\|\dfrac{\partial}{\partial x}b(r,\cdot)\right\|_{\BMO(\mathbb R)}\,dr \right)}\ \forall\ t\in [s,T_0].
\end{equation}

\noindent{\bf Step 3 - removing the dependence of Lipschitz constant}.
Let $T_1\in (s,T]$ obey
\begin{equation}\label{time-bmo}
\int_s^{T_1} C_3\left\|\dfrac{\partial}{\partial x}b(r,\cdot)\right\|_{\BMO(\mathbb R)}\,dr \exp\left(C_3C_4\int_s^{T_1}\left\|\dfrac{\partial}{\partial x}b(r,\cdot)\right\|_{\BMO(\mathbb R)}\,dr\right)\le2^{-1}\epsilon_0,
\end{equation}
We claim that \eqref{est-log-bmo4} holds for all $t\in (s,T_1]$.

If $T_1\le T_0$, then the claim follows from \eqref{est-log-bmo4}.

Suppose now $T_0<T_1$. Assume that for some $t_0\in [T_0,T_1)$, \eqref{est-log-bmo4} holds for all $t\in (s,t_0]$.
Then
\begin{align*}\label{est-log-bmo6}
I_s(t_0)\le \frac{\int_s^{t_0} C_3\left\|\dfrac{\partial}{\partial x}b(r,\cdot)\right\|_{\BMO(\mathbb R)}\,dr}{ \exp\left(-\int_s^{t_0} C_3C_4\left\|\dfrac{\partial}{\partial x}b(r,\cdot)\right\|_{\BMO(\mathbb R)}\,dr \right)}\le2^{-1}\epsilon_0.
\end{align*}
Since
$$\frac{\,\partial b(t,\cdot)}{\,\partial x}\in L^1(0,T;L^\infty(\R)),$$
We can choose $ t_1\in (t_0,T_1]$ such that
\begin{equation}\label{extend-time1}
\int_{t_{0}}^{t_1} C_3\left\|\dfrac{\partial}{\partial x}b(r,\cdot)\right\|_{\BMO(\mathbb R)} e^{2\int_{t_{0}}^{r} A(z)\,dz}\,dr<\epsilon_0
\end{equation}
and
\begin{equation}\label{extend-time2}
\frac{\int_{t_0}^{t_1} C_3\left\|\dfrac{\partial}{\partial x}b(r,\cdot)\right\|_{\BMO(\mathbb R)}\,dr }{\exp\left(-\int_{t_0}^{t_1} C_3C_4\left\|\dfrac{\partial}{\partial x}b(r,\cdot)\right\|_{\BMO(\mathbb R)}\,dr \right)}<\frac{\epsilon_0}{2C_3(1+C_4 2^{-1}\epsilon_0)}.
\end{equation}

The same argument as in proving \eqref{est-log-bmo4} then implies that for $t_0<t\le t_1$ it holds
\begin{align}\label{est-log-bmo5}
I_{t_0}(t)\le \frac{\int_{t_0}^t C_3\left\|\dfrac{\partial}{\partial x}b(r,\cdot)\right\|_{\BMO(\mathbb R)}\,dr}{\exp\left(-\int_{t_0}^t C_3C_4\left\|\dfrac{\partial}{\partial x}b(r,\cdot)\right\|_{\BMO(\mathbb R)}\,dr \right)}<\frac{\epsilon_0}{2C_3(1+C_4 2^{-1}\epsilon_0)}.
\end{align}

For any $t\in (t_0,t_1]$,  we have via the semigroup property of the flow that
$$\phi_s(t,x)=\phi_{t_{0}}(t, \phi_s(t_0,x)).$$
By applying Lemma \ref{bmo-comp}, Lemma \ref{weight-bmo} and \eqref{est-log-bmo5}, we find
\begin{align*}\label{est-comp-bmo}
&\left\|\log\Big|\frac{\,\partial}{\,\partial x}\phi_s(t,\cdot)\Big|\right\|_{\BMO(\mathbb R)}\nonumber\\
&\ \ =\left\|\log\Big|\frac{\,\partial}{\,\partial x}\phi_{t_{0}}(t, \phi_s(t_{0},\cdot))\Big|\right\|_{\BMO(\mathbb R)}\nonumber\\
&\ \ \le \left\|\log\Big|\frac{\,\partial}{\,\partial z}\phi_{t_{0}}(t,z)\large|_{z= \phi_s(t_0,\cdot)}\Big|\right\|_{\BMO(\mathbb R)}+\left\|\log\Big|\frac{\,\partial}{\,\partial x}\phi_{s}(t_0,\cdot)\Big|\right\|_{\BMO(\mathbb R)}\\
&\ \ \le C_3\left\|\log\Big|\frac{\,\partial}{\,\partial x}\phi_{t_{0}}(t,\cdot)\Big|\right\|_{\BMO(\mathbb R)} \left[\frac{\,\partial}{\,\partial x}\tilde\phi_{t_0}(s,\cdot)\right]_{A_\infty(\mathbb R)} +\left\|\log\Big|\frac{\,\partial}{\,\partial x}\phi_{s}(t_0,\cdot)\Big|\right\|_{\BMO(\mathbb R)}\nonumber\\
&\ \ <\frac{\epsilon_0C_3(1+C_4 2^{-1}\epsilon_0)}{2C_3(1+C_4 2^{-1}\epsilon_0)}+\frac{\epsilon_0}{2}\nonumber\\
&\ \ =\epsilon_0.\nonumber
\end{align*}
This derives
\begin{align*}
\sup_{s\le t\le t_1}\left\{\left\|\log\Big|\dfrac{\partial}{\partial x}\phi_s(t,\cdot)\Big|\right\|_{\BMO(\mathbb R)},\,
\left\|\log\Big|\dfrac{\partial}{\partial x}\tilde\phi_t(s,\cdot)\Big|\right\|_{\BMO(\mathbb R)}\right\}<\epsilon_0.
\end{align*}
Using this estimate in {\bf Step 2}, we further have the following estimate
\begin{align*}
&\sup_{s\le t\le t_1}\left\{\left\|\log\Big|\dfrac{\partial}{\partial x}\phi_s(t,\cdot)\Big|\right\|_{\BMO(\mathbb R)},\,
\left\|\log\Big|\dfrac{\partial}{\partial x}\tilde\phi_t(s,\cdot)\Big|\right\|_{\BMO(\mathbb R)}\right\}\nonumber \\
&\quad\le \int_s^{t_1} C_3\left\|\dfrac{\partial}{\partial x}b(s,\cdot)\right\|_{\BMO(\mathbb R)} \exp\left(C_3C_4\int_s^{t_1}\left\|\dfrac{\partial}{\partial x}b(s,\cdot)\right\|_{\BMO(\mathbb R)}\,ds\right)\,ds\\
&\quad<2^{-1}\epsilon_0,\nonumber
\end{align*}
which implies that \eqref{est-log-bmo4} holds for all $t\in (s,t_1]$.

Since in \eqref{extend-time1} and \eqref{extend-time2} the extension of time only depends on $b$ itself,
we may iterate this argument finite times and conclude that \eqref{est-log-bmo4} holds for all $t\in (s,T_1]$.

\noindent{\bf Step 4 - completing argument}.
Since $b$ satisfies
$$\frac{\,\partial b(t,\cdot)}{\,\partial x}\in L^1(0,T;L^\infty(\R)),$$
we may choose a sequence of increasing numbers $\{T_i\}_{i=1,\cdots,k_0}$ such that
$T_1=0$, $T_{k_0}=T$ and
\begin{align*}
\frac{\int_{T_i}^{T_{i+1}} C_3\left\|\dfrac{\partial}{\partial x}b(s,\cdot)\right\|_{\BMO(\mathbb R)}\,ds}{ \exp\left(-\int_{T_i}^{T_{i+1}} C_3C_4\left\|\dfrac{\partial}{\partial x}b(s,\cdot)\right\|_{\BMO(\mathbb R)}\,ds \right)}=2^{-1}\epsilon_0
\ \ \forall\ \ i\in\{1,...,k_0-2\},
\end{align*}
and
\begin{align*}
\frac{\int_{T_{k_0}-1}^{T_{k_0}} C_3\left\|\dfrac{\partial}{\partial x}b(s,\cdot)\right\|_{\BMO(\mathbb R)}\,ds}{ \exp\left(-\int_{T_{k_0}-1}^{T_{k_0}}  C_3C_4\left\|\dfrac{\partial}{\partial x}b(s,\cdot)\right\|_{\BMO(\mathbb R)}\,ds \right)}\le 2^{-1}\epsilon_0
\end{align*}
If $t\in (T_1,T_2]$, then {\bf Step 3} gives
\begin{equation}\label{pri-bmo-1}
\left\|\log\Big|\frac{\,\partial}{\,\partial x}\phi(t,\cdot)\Big|\right\|_{\BMO(\mathbb R)}\le\int_0^t\frac{ C_3\left\|\dfrac{\partial}{\partial x}b(r,\cdot)\right\|_{\BMO(\mathbb R)}}{ \exp\left(-C_3C_4\int_0^{t}\left\|\dfrac{\partial}{\partial x}b(r,\cdot)\right\|_{\BMO(\mathbb R)}\,dr\right)}\,ds.
\end{equation}

Suppose that $t$ belongs to
$$
\text{some}\ \ (T_i,T_{i+1}]\ \ \text{with}\ \ 2\le i\le k_0-1.
$$
By using the semigroup property of the flow $\phi$, we have
$$\phi(t,x)=\phi_{T_i}(t,\cdot)\circ\phi_{T_{i-1}}(T_i,\cdot)\circ\cdots\phi_{T_1}(T_2,x).$$
By using Lemma \ref{bmo-comp}, Lemma \ref{weight-bmo} and {\bf Step 3}, we conclude
\begin{align*}
&\left\|\log\Big|\frac{\,\partial}{\,\partial x}\phi(t,\cdot)\Big|\right\|_{\BMO(\mathbb R)}\\
&\ \ =\left\|\log\Big|\frac{\,\partial}{\,\partial x}\phi_{T_{2}}(t, \phi_{T_1}(T_{2},\cdot))\Big|\right\|_{\BMO(\mathbb R)}\nonumber\\
&\ \ \le \left\|\log\Big|\frac{\,\partial}{\,\partial z}\phi_{T_{2}}(t,z)\large|_{z= \phi_{T_1}(T_2,\cdot)}\Big|\right\|_{\BMO(\mathbb R)}+\left\|\log\Big|\frac{\,\partial}{\,\partial x}\phi_{T_1}(T_2,\cdot)\Big|\right\|_{\BMO(\mathbb R)}\nonumber\\
&\ \ \le C_3\left\|\log\Big|\frac{\,\partial}{\,\partial x}\phi_{T_{2}}(t,\cdot)\Big|\right\|_{\BMO(\mathbb R)} \left[\Big|\frac{\,\partial}{\,\partial x}\tilde\phi_{T_2}(T_1,\cdot)\Big|\right]_{A_\infty(\mathbb R)} +\left\|\log\frac{\,\partial}{\,\partial x}\phi_{T_1}(T_2,\cdot)\right\|_{\BMO(\mathbb R)}\nonumber\\
&\ \ \le C_3(1+C_4 2^{-1}\epsilon_0)\left\|\log\Big|\frac{\,\partial}{\,\partial x}\phi_{T_{2}}(t,\cdot)\Big|\right\|_{\BMO(\mathbb R)} +2^{-1}\epsilon_0\nonumber\\
&\ \ \le C_3(1+C_4)\left\|\log\Big|\frac{\,\partial}{\,\partial x}\phi_{T_{2}}(t,\cdot)\Big|\right\|_{\BMO(\mathbb R)} +1\nonumber\\
&\ \ \le \big(C_3(1+C_4)\big)^2\left\|\log\Big|\frac{\,\partial}{\,\partial x}\phi_{T_{3}}(t,\cdot)\Big|\right\|_{\BMO(\mathbb R)}+ C_3(1+C_4)+1\nonumber\\
&\ \ \le \cdots\nonumber\\
&\ \ \le \big(C_3(1+C_4)\big)^{i-1}\left\|\log\Big|\frac{\,\partial}{\,\partial x}\phi_{T_{i}}(t,\cdot)\Big|\right\|_{\BMO(\mathbb R)}+ \sum_{j=0}^{i-2}\big(C_3(1+C_4)\big)^{j}\nonumber\\
&\ \ \le \big(C_3(1+C_4)+1\big)^{i}.\nonumber
\end{align*}
Let $\delta_0>0$ obey $$C_3\delta_0 e^{C_3C_4\delta_0}=2^{-1}\epsilon_0.$$ As
$$
\begin{cases}
\epsilon_0\le 1;\\
\delta_0<1;\\
t\in (T_{i},T_{i+1}],
\end{cases}
$$
by our choice of $\{T_i\}$ we find
$$(i-1)\delta_0<\int_{0}^t\left\|\dfrac{\partial}{\partial x}b(s,\cdot)\right\|_{\BMO(\mathbb R)}\,ds\le i\delta_0,$$
whence
\begin{align*}
\left\|\log\Big|\frac{\,\partial}{\,\partial x}\phi(t,\cdot)\Big|\right\|_{\BMO(\mathbb R)}\le \frac{\frac{1}{\delta_0}\int_{0}^t\left\|\dfrac{\partial}{\partial x}b(s,\cdot)\right\|_{\BMO(\mathbb R)}\,ds}{
\exp\left(-C\int_{0}^t\left\|\dfrac{\partial}{\partial x}b(s,\cdot)\right\|_{\BMO(\mathbb R)}\,ds  \right)}.
\end{align*}
This, together with \eqref{pri-bmo-1}, implies
\begin{equation*}
\left\|\log\Big|\frac{\,\partial}{\,\partial x}\phi(t,\cdot)\Big|\right\|_{\BMO(\mathbb R)}\le \frac{\int_0^{t} \frac{C_3}{\delta_0}\left\|\dfrac{\partial}{\partial x}b(r,\cdot)\right\|_{\BMO(\mathbb R)}}{\exp\left(-C\int_0^{t}\left\|\dfrac{\partial}{\partial x}b(r,\cdot)\right\|_{\BMO(\mathbb R)}\,dr\right)\,ds},
\end{equation*}
as desired.

\end{proof}

Rather surprisingly, the hypothesis
$$
\int_0^T\Bigg\|\frac{\,\partial b(t,\cdot)}{\,\partial x}\Bigg\|_{L^\infty(\R)}\,dt<\infty
$$
in Theorem \ref{pri-flow} can be replaced by a weaker one
$$
\int_0^T\Bigg\|\frac{\,\partial b(t,\cdot)}{\,\partial x}\Bigg\|_{\ast}\,dt<\infty
$$
in the following assertion.

\begin{thm}\label{pri-flow-2}
Let $$b(t,x):\,[0,T]\times\rr\mapsto \rr\ \ \text{be in}\ \ L^1(0,T;C^1(\rr))\ \ \text{with}\ \
\int_0^T\Bigg\|\frac{\,\partial b(t,\cdot)}{\,\partial x}\Bigg\|_{\ast}\,dt<\infty.
$$
Then there exists a unique flow $\phi(t,x)$ satisfying
$$
\begin{cases}
\dfrac{\partial}{\partial t}\phi(t,x)=b(t,\phi(t,x))\ \ &\forall\ \ (t,x)\in [0,T]\times\R;\\
\phi_0(x)=x\ \ &\forall\ \ x\in\R.
\end{cases}
$$
Moreover
$$
\left\| \log\Big|\dfrac{\partial}{\partial x}\phi(t,x)\Big|\right\|_{\BMO(\mathbb R)}
\le \frac{\int_{0}^t 2C_6\left\|\dfrac{\partial}{\partial x}b(s,)\right\|_{\BMO(\mathbb R)} \,ds}{ \exp\left(-2C_7\int_{0}^t\left\|\dfrac{\partial}{\partial x}b(s,)\right\|_{\BMO(\mathbb R)}\,ds\right)}\ \ \forall\ \ t\in [0,T].
$$
\end{thm}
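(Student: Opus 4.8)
The plan is to deduce Theorem \ref{pri-flow-2} from Theorem \ref{pri-flow} by truncating the spatial derivative of $b$, applying Theorem \ref{pri-flow} to the truncations with a uniform a priori bound, and then passing to the limit by the Arzel\`a--Ascoli theorem together with a lower semicontinuity argument for the $\BMO(\mathbb R)$-seminorm.

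\medskip
\noindent\textbf{Step 1: truncation and reduction to Theorem \ref{pri-flow}.} For $N\in\mathbb N$ put $g_N(t,y)=\max\{-N,\min\{N,\partial_x b(t,y)\}\}$ and $b_N(t,x)=b(t,0)+\int_0^x g_N(t,y)\,dy$. Since $\partial_x b(t,\cdot)$ is continuous, so is $g_N(t,\cdot)$, hence $b_N\in L^1(0,T;C^1(\mathbb R))$ with $\|\partial_x b_N(t,\cdot)\|_{L^\infty(\mathbb R)}\le N$; as $u\mapsto\max\{-N,\min\{N,u\}\}$ is $1$-Lipschitz, $\|\partial_x b_N(t,\cdot)\|_{\BMO(\mathbb R)}\le 2\|\partial_x b(t,\cdot)\|_{\BMO(\mathbb R)}$ and $\|\partial_x b_N(t,\cdot)\|_\ast\le 2\|\partial_x b(t,\cdot)\|_\ast$; moreover $b_N\to b$ and $\partial_x b_N\to\partial_x b$ locally uniformly in $x$, with all relevant quantities dominated in $t$ by multiples of $\|\partial_x b(t,\cdot)\|_\ast\in L^1(0,T)$ (using $\int_{-R}^R|\partial_x b(t,y)|\,dy\le C_R\|\partial_x b(t,\cdot)\|_\ast$ for fixed $R$). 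Theorem \ref{pri-flow} now applies to each $b_N$, producing the unique smooth flow $\phi^N$; inserting $\|\partial_x b_N\|_{\BMO(\mathbb R)}\le 2\|\partial_x b\|_{\BMO(\mathbb R)}$ in its conclusion and using that $u\mapsto u\,e^{cu}$ is nondecreasing, we obtain, uniformly in $N$ and for every $t\in[0,T]$,
\begin{equation*}
\left\|\log\big|\partial_x\phi^N(t,\cdot)\big|\right\|_{\BMO(\mathbb R)}\le\frac{\int_0^t 2C_6\,\|\partial_x b(s,\cdot)\|_{\BMO(\mathbb R)}\,ds}{\exp\!\big(-2C_7\int_0^t\|\partial_x b(s,\cdot)\|_{\BMO(\mathbb R)}\,ds\big)}=:\Lambda(t)<\infty ,
\end{equation*}
and, applying Theorem \ref{pri-flow} to the time-reversed fields $-b_N(r-\cdot,\cdot)$, also $\big\|\log\big|\big(\phi^N(r,\cdot)^{-1}\big)'\big|\big\|_{\BMO(\mathbb R)}\le\Lambda(r)\le\Lambda(t)$ for $0\le r\le t$.

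\medskip
\noindent\textbf{Step 2: compactness of the flows.} By Propositions \ref{cont-est} and \ref{cont-est-2} applied to the $b_N$ --- whose $\BMO(\mathbb R)$- and $\|\cdot\|_\ast$-norms are at most twice those of $b$ --- the fields $b_N$ share a common spatial modulus of continuity of Osgood type, governed by $\omega(s)=s(1+|\log s|)$, and a common growth bound $|b_N(t,y)|\le|b(t,0)|+C\|\partial_x b(t,\cdot)\|_\ast\,\omega(2+|y|)$, all integrable in $t$. A standard Osgood/Gronwall argument (cf.\ \cite{Am,CJMO1,CDL08,DPL} and \cite{R2}) then shows that $\{\phi^N\}$ and the inverse flows are uniformly bounded and equicontinuous in $(t,x)$ on $[0,T]\times K$ for every compact $K$. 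By Arzel\`a--Ascoli and the uniqueness of the flow associated to $b$ --- which itself satisfies the Osgood modulus of Proposition \ref{cont-est-2} (equivalently, by \cite{R2}) --- the whole sequence $\phi^N$ converges locally uniformly to the (unique) flow $\phi$ of $b$, i.e.\ $\partial_t\phi(t,x)=b(t,\phi(t,x))$, $\phi(0,x)=x$ (pass to the limit in the integral equation by dominated convergence, with the $L^1(0,T)$ growth bound as majorant); likewise for the backward flow, so $\phi(t,\cdot)$ is an increasing homeomorphism of $\mathbb R$.

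\medskip
\noindent\textbf{Step 3: passing the $\BMO(\mathbb R)$-bound to the limit.} Fix $t\in[0,T]$. From the smooth identity $\log|\partial_x\phi^N(t,x)|=\int_0^t\partial_x b_N(r,\phi^N(r,x))\,dr$ I claim
\begin{equation*}
\log\big|\partial_x\phi^N(t,\cdot)\big|\ \longrightarrow\ F_t:=\int_0^t\partial_x b(r,\phi(r,\cdot))\,dr\qquad\text{in }L^1_{\loc}(\mathbb R)\ \text{as}\ N\to\infty .
\end{equation*}
Indeed, for a.e.\ $r$ the function $\partial_x b(r,\cdot)$ is continuous, so $g_N(r,\cdot)=\partial_x b(r,\cdot)$ on any fixed compact once $N$ is large, whence $\partial_x b_N(r,\phi^N(r,\cdot))\to\partial_x b(r,\phi(r,\cdot))$ locally uniformly in $x$ by Step 2; and the change of variables $y=\phi^N(r,x)$ gives
\begin{equation*}
\int_K\big|\partial_x b_N(r,\phi^N(r,x))\big|\,dx=\int_{\phi^N(r,K)}\big|g_N(r,y)\big|\,\big(\phi^N(r,\cdot)^{-1}\big)'(y)\,dy\le C\,\|\partial_x b(r,\cdot)\|_\ast ,
\end{equation*}
where the bound uses H\"older's inequality, the inclusion $\BMO(\mathbb R)\subset L^{p'}_{\loc}(\mathbb R)$ (with norm $\lesssim\|\cdot\|_\ast$ on a fixed compact), and a uniform-in-$N$ estimate $\|(\phi^N(r,\cdot)^{-1})'\|_{L^p(\text{fixed compact})}\le C$ --- valid for some fixed $p>1$ by the John--Nirenberg inequality applied to $\|\log|(\phi^N(r,\cdot)^{-1})'|\|_{\BMO(\mathbb R)}\le\Lambda(t)$ from Step 1 and the uniform compactness of Step 2 --- and the right-hand side lies in $L^1(0,t)$; dominated convergence in $r$ then proves the claim. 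Next, the John--Nirenberg inequality applied to $\|\log|\partial_x\phi^N(t,\cdot)|\|_{\BMO(\mathbb R)}\le\Lambda(t)$ shows that $\{\partial_x\phi^N(t,\cdot)\}_N$ is bounded in $L^p_{\loc}(\mathbb R)$ for some $p=p(\Lambda(t))>1$, hence equi-integrable on compacts; combined with convergence in measure (from the displayed $L^1_{\loc}$-convergence) this yields $\partial_x\phi^N(t,\cdot)=e^{\log|\partial_x\phi^N(t,\cdot)|}\to e^{F_t}$ in $L^1_{\loc}(\mathbb R)$. Since also $\partial_x\phi^N(t,\cdot)\to\partial_x[\phi(t,\cdot)]$ in $\mathcal D'(\mathbb R)$ (because $\phi^N(t,\cdot)\to\phi(t,\cdot)$ in $L^1_{\loc}$), we conclude that $\phi(t,\cdot)$ is locally absolutely continuous with $\partial_x\phi(t,\cdot)=e^{F_t}>0$ a.e., so $\log|\partial_x\phi(t,\cdot)|=F_t$. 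Finally, on any interval $I$ the $L^1(I)$-convergence forces $\big(\log|\partial_x\phi^N(t,\cdot)|\big)_I\to(F_t)_I$ and $\frac1{|I|}\int_I\big|\log|\partial_x\phi^N(t,\cdot)|-(\log|\partial_x\phi^N(t,\cdot)|)_I\big|\,dx\to\frac1{|I|}\int_I|F_t-(F_t)_I|\,dx$; since the left-hand side is $\le\Lambda(t)$ for every $N$, so is the right-hand side, and taking the supremum over $I$ gives $\|\log|\partial_x\phi(t,\cdot)|\|_{\BMO(\mathbb R)}\le\Lambda(t)$, which is the desired estimate.

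\medskip
\noindent\textbf{Where the difficulty lies.} The one genuinely delicate point is the uniform-in-$N$ domination of the $r$-integrand in Step 3: one must control the local integral of $\partial_x b_N(r,\phi^N(r,\cdot))$ by a function of $r$ lying in $L^1(0,T)$, and a naive pointwise estimate fails because $\sup_{\text{compact}}|\partial_x b(r,\cdot)|$ need not be integrable in $r$. Changing variables turns the local integral into a H\"older pairing of the $\BMO(\mathbb R)$-function $\partial_x b(r,\cdot)$ against the Jacobian of the inverse flow, which is an $A_\infty(\mathbb R)$-weight whose logarithm has $\BMO(\mathbb R)$-norm bounded \emph{uniformly in $N$} by $\Lambda(t)$ --- exactly the output of Step 1 --- so that a reverse-H\"older exponent $p=p(\Lambda(t))>1$ can be fixed once and for all and the pairing is $\lesssim\|\partial_x b(r,\cdot)\|_\ast\in L^1(0,T)$. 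Everything else is bookkeeping with the Osgood modulus from Propositions \ref{cont-est}--\ref{cont-est-2}, and the factor $2$ in the exponents $2C_6,2C_7$ of the final estimate is precisely the price of the doubling of the $\BMO(\mathbb R)$-norm of the derivative under the truncation $\partial_x b\mapsto g_N$.
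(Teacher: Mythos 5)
Your proposal is correct and follows essentially the same route as the paper: truncate $\partial_x b$ at height $N$ (doubling the $\BMO$ norms, whence the factors $2C_6,2C_7$), apply Theorem \ref{pri-flow} to the truncated fields, use the uniform Zygmund/growth bounds from Propositions \ref{cont-est}--\ref{cont-est-2} together with Arzel\`a--Ascoli and uniqueness to pass to the flow of $b$, and then transfer the uniform estimate to the limit. The only (harmless) deviation is the final limiting step, where you establish $L^1_{\loc}$-convergence of $\log\big|\partial_x\phi^N(t,\cdot)\big|$ via a change of variables plus a John--Nirenberg domination of the inverse-flow Jacobian and pass the interval-wise oscillation bound to the limit, whereas the paper argues via pointwise convergence of $\log\big|\partial_x\phi_k(t,\cdot)\big|$ and weak-$\ast$ compactness in $\BMO(\R)$; your version is, if anything, more explicit about the domination needed there.
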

\begin{proof} The existence and uniqueness has essentially been established in \cite{R2}. So it remains to verify the last $ \BMO(\R)$-size estimate.

For each $(k,t)\in\cn\times [0,T]$ set
$$\begin{cases}
v_k(t,x)=\min\left\{\max\{-k,\partial_x b(t,x)\},\,k\right\};\\
b_k(t,x)=b(t,0)+\int_0^x v_k(t,y)\,dy.
\end{cases}$$
Then
\begin{equation}\label{truncation}
\begin{cases}
\partial_x b_k(t,\cdot)\in L^1(0,T;L^\infty(\R));\\
\|v_k(t,\cdot)\|_{ \BMO(\R)}\le 2\|\partial_xb(t,\cdot)\|_{ \BMO(\R)};\\
\|v_k(t,\cdot)\|_{\ast}\le 2\|\partial_xb(t,\cdot)\|_{\ast}.
\end{cases}
\end{equation}
In accordance with Propositions \ref{cont-est}-\ref{cont-est-2}, we see that
$\{b_k\}$ and $b$ satisfy the Zygmund condition with a uniform constant.

Let $\{\phi_k,\phi\}$ be the unique flow pair generalized by $\{b_k(t,x),b(t,x)\}$.
Then by \cite[Proposition 4]{R2}, we see that $\phi(t,\cdot)$ and $\phi_k(t,\cdot)$ are locally H\"older continuous on $\R$ for each $t\in [0,T]$.
Moreover for each compact set $K\subset \R$, both $\phi(t,\cdot)$ and $\phi_k(t,\cdot)$ are H\"older continuous on
$K$ for each $t\in [0,T]$ with the H\"older exponent and constant depending only on
$$\int_0^t\|\partial_x b(s,\cdot)\|_\ast\,ds.$$

On the other hand, by the construction of $b_k$ and Proposition \ref{cont-est-2} we have
\begin{eqnarray*}
|b_k(t,x)-b(t,0)|\le C_5\|v_k(t,\cdot)\|_\ast|x|(1+|\log|x||)\le 2C_5\|\partial_xb(t,\cdot)\|_\ast|x|(1+|\log|x||),
\end{eqnarray*}
thereby getting that
$$\big\{|\phi_k(t,x)|:\,(t,x)\in \ [0,T]\times K\big\}$$
is uniformly bounded. Denote by
$$C_8(K):=\sup\left\{|\phi_k(t,x)|+|\phi(t,x)|:\,(t,x,k)\in \ [0,T]\times K\times\cn\right\}. $$
Then it holds for each $x\in K$  and all $0\le s<t\le T$that
\begin{eqnarray*}
\left|\phi_k(t,x)-\phi_k(s,x)\right|&&\le \int_s^t |b_k(r,\phi_k(r,x))|\,dr\\
&&\le \int_s^t \left(|b(r,0)|+2C_5\|\partial_xb(r,\cdot)\|_\ast|C_8(K)|(1+|\log|C_8(K)||)\right)\,dr.
\end{eqnarray*}
This, together with the previous discussion on the H\"older continuity in the spatial direction, implies that
$\{\phi_k\}_k$ are equicontinuous on $[0,T]\times K$. Applying the Arzel\'a-Ascoli theorem,
we conclude that there is a subsequence of $\{\phi_k\}_k$, denoted by $\{\phi_{K,k}\}_k$, such that
$\phi_{K,k}$ converges uniformly on $[0,T]\times K$.

By construction we have
$$
b_k(t,x)\to b(t,x)\ \ \text{as}\ \ k\to \infty,
$$
thereby concluding that if $(t,x)\in [0,T]\times K$ then
\begin{eqnarray*}
\lim_{k\to\infty}\phi_{K,k}(t,x)&&=x+\lim_{k\to\infty}\int_0^tb_{K,k}(s,\phi_{K,k}(s,x))\,ds\\
&&=x+\lim_{k\to\infty}\int_0^t\int_0^{\phi_{K,k}(s,x)}[v_{K,k}(s,y)-\partial_xb(s,y)]\,dy\,ds+
\lim_{k\to\infty}\int_0^t b(s,\phi_{K,k}(s,x))\,ds.
\end{eqnarray*}
Since
$$|\phi_k(s,x)|\le C_8(K),
$$
one has
$$\left|\int_0^t\int_0^{\phi_{K,k}(s,x)}[v_{K,k}(s,y)-\partial_xb(s,y)]\,dy\,ds\right|\le \int_0^T\int_{-C_8(K)}^{C_8(K)}|\partial_xb(s,y)|\,dy\,ds<\infty ,$$
and hence the dominated convergence theorem and continuity of $b(t,\cdot)$ guarantee
\begin{eqnarray*}
\lim_{k\to\infty}\phi_{K,k}(t,x)&&=x+\int_0^t b(s,\lim_{k\to\infty}\phi_{K,k}(s,x))\,ds.
\end{eqnarray*}
By choosing a sequence of increasing compacts $K_j$ such that $\R=\cup_jK_j$ and passing
to further subsequences, we see that there is a subsequence of $\{\phi_{k}\}$, still denoted by $\{\phi_{K,k}\}$,
such that $\phi_{K,k}(t,x)$ converges on $[0,T]\times\R$, and uniformly on any compact subset $[0,T]\times \tilde K$, and consequently,
\begin{eqnarray*}
\lim_{k\to\infty}\phi_{K,k}(t,x)&&=x+\int_0^t b(s,\lim_{k\to\infty}\phi_{K,k}(s,x))\,ds,\ \forall\,(t,x)\in [0,T]\times \R.
\end{eqnarray*}
By the uniqueness, we see that
$$\phi(t,x)=\lim_{k\to\infty}\phi_{K,k}(t,x),\, \forall\,(t,x)\in [0,T]\times \R, $$
and the convergence is uniform on any compact set.

Since $$b(t,x)\in L^1(0,T;C^1(\R)),$$
and so is any $b_k(t,x)$. Accordingly, the proof of Theorem \ref{pri-flow} yields that if $(t,x)\in [0,T]\times \R$ then
\begin{align*}
\log\Big|\dfrac{\partial}{\partial x}\phi(t,x)\Big|&=\int_0^t\dfrac{\partial}{\partial x}b(s,\phi(s, x))\,ds\\
&=\int_0^t \lim_{k\to\infty}v_k(s,\phi_k(s,x))\,ds\\
&=\lim_{k\to \infty} \log\Big|\dfrac{\partial}{\partial x}\phi_k(t,x)\Big|.
\end{align*}
By \eqref{truncation} and Theorem \ref{pri-flow}, we see that for each $k\in\cn$, it holds
$$
\left\| \log\Big|\dfrac{\partial}{\partial x}\phi_k(t,x)\Big|\right\|_{\BMO(\mathbb R)}
\le \frac{\int_{0}^t 2C_6\left\|\dfrac{\partial}{\partial x}b(s,)\right\|_{\BMO(\mathbb R)} \,ds}{ \exp\left(-2C_7\int_{0}^t\left\|\dfrac{\partial}{\partial x}b(s,)\right\|_{\BMO(\mathbb R)}\,ds\right)}.
$$
By this, the weak-$\ast$ compactness in $\BMO(\R)$, and the pointwise convergence of
$$\dfrac{\partial}{\partial x}\phi_k(t,x),$$ we conclude that the last estimation holds also for
$$\log\Big|\dfrac{\partial}{\partial x}\phi(t,x)\Big|,
$$
thereby completing the proof.
\end{proof}

\section{\large Proof of main results}\label{s4}

\begin{proof}[Proof of Theorem \ref{main}] The argument consists of three steps.
	
\medskip

\noindent{\bf Step 1 - an Orlicz space estimate}. Let $\mu$ denote the Gaussian measure on $\R$, i.e.,
$$\mu(x)=\frac{1}{\sqrt{2\pi}}\exp\left(-\frac{|x|^2}{2}\right),$$
and $\mathrm{div}_\mu b$ denotes the distributional divergence of $b$ with respect to $\mu$.
We say that a measurable function $$f\in \mathrm{Exp}_\mu(\frac{L}{\log L})$$ provided
$$\|f\|_{\mathrm{Exp}_\mu(\frac{L}{\log L})}=\inf\left\{\lambda>0:\int_{\R}\left[\exp\left(\frac{|f(x)|/\lambda}{1+\log^+(|f(x)|/\lambda)}\right)-1\right]\,d\mu\leq 1\right\}.$$

Let $b(t,x)$ obey \eqref{e11}. Then
\begin{equation}\label{growth-est}
\begin{cases}\frac{b(t,x)}{1+|x|\log^+|x|}\in L^1(0,T;L^\infty(\R));\\
\mathrm{div}_\mu b(t,x)\in L^1(0,T;\mathrm{Exp}_\mu(\frac{L}{\log L})).
\end{cases}
\end{equation}
As a matter of fact, the first estimate of \eqref{growth-est} follows from Proposition \ref{cont-est} as
$$\frac{|b(t,x)|}{1+|x|\log^+|x|}\le \frac{|b(t,x)-b(t,0) +b(t,0)|}{1+|x|\log^+|x|}\le |b(t,0)|+C\left\|\frac{\,\partial}{\,\partial x}b(t,\cdot)\right\|_{\ast}.$$
To verify the second relation in \eqref{growth-est},
set $$\beta(t)=|b(t,0)|+C\left\|\frac{\,\partial}{\,\partial x}b(t,\cdot)\right\|_{ \BMO(\R)}.
$$
Noting that
\begin{align*}
\int_\R\exp\bigg(\frac{c\big|xb(t,x)\big|}{1+\log^+(c\big|xb(t,x)\big|)}\bigg)\,d\mu(x)\le \int_\R\exp\big(\frac{c|x|(1+|x|\log^+|x|)\beta(t)}{1+\log^+(c|x|(1+|x|\log^+|x|)\beta(t))}\big)\,d\mu(x),
\end{align*}
we obtain
$$\big\| xb(t,x)\big\|_{\mathrm{Exp}_\mu(\frac{L}{\log L})}\le C\beta(t).$$
On the other hand, for a $ \BMO(\R)$-function $f$,
we utilize
the John-Nirenberg inequality:
$$|\{x\in I:\,|f(x)-f_I|>\lambda\}|\le c_1|I|\exp\left(-\frac{c_2\lambda}{\|f\|_{ \BMO(\R)}}\right)\ \ \forall\ \ \text{interval}\ I\subset\R$$
to obtain that if
$$
\begin{cases}
I=[x-r,x+1];\\
(x,r)\in\R\times[1,\infty);\\
\gamma(t)=\left\|\frac{\,\partial}{\,\partial x}b(t,\cdot)\right\|_{\ast};\\
\alpha=c_2\big(2\gamma(t)\big)^{-1},
\end{cases}
$$
then
$$|f_I|\le |f_{I}-f_{[-1,1]}|+|f_{[-1,1]}|\le C(1+\log^+|x|)\|f\|_{\ast},
$$
and hence
\begin{align*}
&\int_\R\exp\left(\alpha\left|\frac{\,\partial}{\,\partial x}b(t,x)\right|\right)\,d\mu(x)\\
&\quad\le
\int_{[-1,1]}\exp\Big(\alpha\left|\frac{\,\partial}{\,\partial x}b(t,x)\Big|\right)\,d\mu+ \sum_{k=1}^\infty\Bigg( \int_{[2^{k-1},2^k]}+\int_{[-2^k,-2^{k-1}]}\Bigg)\exp\left(\alpha\left|\frac{\,\partial}{\,\partial x}b(t,x)\right|\right)\,d\mu(x)\\
&\quad\le e^{2\alpha\gamma(t)} \sum_{k=0}^\infty \alpha 2^k e^{-2^{2k-1}+ck} \Bigg(\frac{\alpha\gamma(t)}{c_2-\alpha\gamma(t)}\Bigg)\le C.
\end{align*}
Consequently we achieve the desired inequality
$$\left\|\frac{\,\partial}{\,\partial x}b(t,\cdot)\right\|_{\mathrm{Exp}_\mu(\frac{L}{\log L})}\le \left\|\frac{\,\partial}{\,\partial x}b(t,\cdot)\right\|_{\mathrm{Exp}_\mu(L)}\le C\left\|\frac{\,\partial}{\,\partial x}b(t,\cdot)\right\|_{\ast}.
$$

\medskip
\noindent{\bf Step 2 - existence-uniqueness-size of flow}. Under \eqref{e11}
we conclude via Proposition \ref{cont-est}
for a.e. $t$, that $b$ is in the Zygmund class, which implies that the flow exists and is unique;
see \cite{R2} for instance.

Moreover, from {\bf Step 1} above it follows that $b$ satisfies requirements from \cite[Main Theorem]{CJMO1} and so that $\phi(t, x)$ is absolutely continuous and differentiable.
Indeed, by using \cite[Theorem 1.2]{CJMO1} and that $b(t,\cdot)$ is in the Zygmund class,
one can deduce that
$$\left|\frac{\partial}{\partial x}\phi(t, \cdot)\right|\left(1+\log^+\left|\frac{\partial}{\partial x}\phi(t, \cdot)\right|\right)^q\in L^1_\loc(\R)$$
for any $q\in [1,\infty)$. As $\partial_x b(t,x)\in \BMO(\R)$ is locally exponentially
integrable, we deduce that
\begin{equation*}\label{chain-rule}
\dfrac{\partial}{\partial t}\left(\dfrac{\partial}{\partial x}\, \phi(t, x)\right)=\left(\frac{\,\partial}{\,\partial z}b(t,z)|_{z=\phi(s,x)}\right)\dfrac{\partial}{\partial x}\, \phi(t, x)
\end{equation*}
and
\begin{equation}\label{formula-density}
\log\left|\dfrac{\partial}{\partial x}\, \phi(t, x)\right|=\int_0^t\frac{\,\partial}{\,\partial x}b(s,\phi(s,x))\,ds.
\end{equation}
For $\epsilon>0$ and $x\in\R$ set
$$
\begin{cases}
0\le \rho\in C^\infty_c(\R);\\
\supp \rho\subset (-1,1);\\
\int_\R \rho(x)\,dx=1;\\
\rho_\epsilon(x)=\frac{1}{\epsilon}\rho\left(\frac x{\epsilon}\right);\\
b_\epsilon(t,x)=b(t,\cdot)\ast\rho_\epsilon(x).
\end{cases}
$$
Note that $$
\frac{\partial}{\partial x}b(t,x)\in L^1(0,T; \BMO(\R))\Rightarrow
\frac{\partial}{\partial x}b_\epsilon(t,x)\in L^1(0,T; \BMO(\R))\cap L^1(0,T;C^\infty(\R)).
$$
Thus we have
$$
\int_0^t\Bigg\|\frac{\partial}{\partial x}b_\epsilon(s,\cdot)\Bigg\|_{\BMO(\mathbb R)}\,ds\le \int_0^t\Bigg\|\frac{\partial}{\partial x}b(s,\cdot)\Bigg\|_{\BMO(\mathbb R)}\,ds\ \ \forall\ \ t\in (0,T]
$$
and so for any $\epsilon\in (0,1)$
$$\Bigg\|\frac{\partial}{\partial x}b_\epsilon(t,\cdot)\Bigg\|_{\ast}
\le 2\Bigg\|\frac{\partial}{\partial x}b(t,\cdot)\Bigg\|_{\ast}\ \ \text{for a.e.}\ \ t\in (0,T].
$$

Let $\phi_\epsilon(t,x)$ be the flow generated by $b_\epsilon$, i.e.,
$$\frac{\,\partial}{\,\partial t}\phi_\epsilon(t,x)=b_\epsilon(t,\phi_\epsilon(t,x)).$$
Then Theorem \ref{pri-flow-2} is utilized to imply
\begin{align*}
\left\| \log\Big|\dfrac{\partial}{\partial x}\phi_\epsilon(t,\cdot)\Big|\right\|_{\BMO(\mathbb R)}
&\le \frac{\int_{0}^t 2C_6\left\|\dfrac{\partial}{\partial x}b_\epsilon(s,\cdot)\right\|_{\BMO(\mathbb R)} \,ds}{ \exp\left(-2C_7\int_{0}^t\left\|\dfrac{\partial}{\partial x}b_\epsilon(s,\cdot)\right\|_{\BMO(\mathbb R)}\,ds\right)}\\
&\le \frac{\int_{0}^t 2C_6\left\|\dfrac{\partial}{\partial x}b(s,\cdot)\right\|_{\BMO(\mathbb R)} \,ds}{ \exp\left(-2C_7\int_{0}^t\left\|\dfrac{\partial}{\partial x}b(s,\cdot)\right\|_{\BMO(\mathbb R)}\,ds\right)}\ \ \ \ \forall\ \ \ \ \epsilon>0.
\end{align*}
The proof of \cite[Main Theorem]{CJMO1} infers that, up to a subsequence $\{\epsilon_k\}_{k\in\cn}$,
$$\lim_{k\to\infty}\phi_{\epsilon_k}(t,x)=\phi(t,x)\ \ \forall\ \ t\in (0,T].
$$
From this, \eqref{formula-density}  and the weak-$^\ast$ compactness in $ \BMO(\R)$,  we conclude that
$\frac{\,\partial}{\,\partial x}\phi$ is the weak-$^\ast$ limit of $\frac{\,\partial}{\,\partial x}\phi_{\epsilon_k}$
for each $t\in (0,T]$. This implies
\begin{align*}
\left\| \log\Big|\dfrac{\partial}{\partial x}\phi(t,\cdot)\Big|\right\|_{\BMO(\mathbb R)}
&\le \frac{\int_{0}^t 2C_6\left\|\dfrac{\partial}{\partial x}b(s,\cdot)\right\|_{\BMO(\mathbb R)} \,ds}{ \exp\left(-2C_7\int_{0}^t\left\|\dfrac{\partial}{\partial x}b(s,\cdot)\right\|_{\BMO(\mathbb R)}\,ds\right)},
\end{align*}
namely, the size estimate \eqref{bmo-flow-1} holds.

\medskip

\noindent{\bf Step 3 - $A_\infty(\R)$ density of flow}.	It remains to show that for each $t\in [0,T]$, $$\left|\dfrac{\partial}{\partial x}\phi(t,\cdot)\right|$$ is an $A_\infty(\mathbb R)$-weight. But, from Theorem \ref{bmo-transport} (to be proved later on), we see that
$$
u_0\in \BMO(\R)\Rightarrow u_0\circ\phi(t,\cdot)\in  \BMO(\R)\ \ \forall\ \ t\in (0,T].
$$
Then we apply \cite[Theorem]{J} to conclude that
	for each $t\in [0,T]$, $$\left|\frac{\,\partial}{\,\partial x}\phi(t,x)\right|$$ is an $A_\infty(\mathbb R)$-weight.

\end{proof}

\begin{proof}[Proof of Theorem \ref{bmo-transport}] The argument consists of three steps.
	
	\medskip
	\noindent{\bf Step 1 - existence of solution}. Let $\phi$ be the flow generated by $b$, i.e.,
$$
\begin{cases}
\dfrac{\partial}{\partial t}\, \phi(t,x)=b(t,\phi(t,x))\ \ &\forall\ \ (t,x)\in (0,T]\times\R;\\
\phi_0(x)=x\ \ &\forall\ \ x\in\R.
\end{cases}
$$
Then the same proof of \cite[Theorem 1]{CJMO0} derives that
$u_0\circ \phi$ is a solution to the transport equation.

\medskip
\noindent{\bf Step 2 - size of solution}. Let $\epsilon_0$ be the same as in Lemma \ref{weight-bmo}, and
$$
\delta_0>0\ \ \&\ \ 2C_6\delta_0 e^{2C_7\delta_0}=2^{-1}\epsilon_0.
$$
We choose a sequence of increasing numbers
$$
0=T_0<T_1<\cdots<T_{k_0}=T
$$
such that
$$
\frac{\int_{T_{i-1}}^{T_{i}} 2C_6\left\|\dfrac{\partial}{\partial x}b(s,\cdot)\right\|_{\BMO(\mathbb R)}\,ds}{ \exp\left(-\int_{T_{i-1}}^{T_{i}} 2C_7\left\|\dfrac{\partial}{\partial x}b(s,\cdot)\right\|_{\BMO(\mathbb R)}\,ds \right)}=2^{-1}\epsilon_0\ \ \ \forall\ \ \ i\in\{1,...,k_0-1\},
$$
and
$$
\frac{\int_{T_{k_0}-1}^{T_{k_0}} 2C_6\left\|\dfrac{\partial}{\partial x}b(s,\cdot)\right\|_{\BMO(\mathbb R)}\,ds}{ \exp\left(-\int_{T_{k_0}-1}^{T_{k_0}}  2C_7\left\|\dfrac{\partial}{\partial x}b(s,\cdot)\right\|_{\BMO(\mathbb R)}\,ds \right)}\le 2^{-1}\epsilon_0.
$$
Suppose that $t$ belongs to
$$
\text{some}\ \ (T_i,T_{i+1}]\ \ \text{where}\ \ i=0,...,k_0-1.
$$

If $i=0$, then by Lemma \ref{bmo-comp} and Lemma \ref{weight-bmo}, we obtain
\begin{align}\label{bmo-solution}
\left\|u(t,\cdot)\right\|_{\BMO(\mathbb R)}&\le C_3\|u_0\|_{\BMO(\mathbb R)} \left(1+C_4\left\| \log\Big|\dfrac{\partial}{\partial x}\tilde\phi_{t}(0,\cdot)\Big|\right\|_{\BMO(\mathbb R)}\right)\nonumber\\
&\le C_3\|u_0\|_{\BMO(\mathbb R)} \left(1+\frac{2C_4C_6 \int_0^{t}\left\|\dfrac{\partial}{\partial x}b(s,\cdot)\right\|_{\BMO(\mathbb R)}\,ds}{\exp\left(-\int_{0}^{t} 2C_7\left\|\dfrac{\partial}{\partial x}b(s,\cdot)\right\|_{\BMO(\mathbb R)}\,ds \right)}\right)\\
&\le C_3\|u_0\|_{\BMO(\mathbb R)}\exp\left(\int_{0}^{t} C\left\|\dfrac{\partial}{\partial x}b(s,\cdot)\right\|_{\BMO(\mathbb R)}\,ds \right).\nonumber
\end{align}

Suppose next $i\ge 1$. By the semigroup property of the flow, we may write
$$u(t,x)=u_0\circ\phi_{T_i}(t,\cdot)\circ \cdots\circ \phi_{T_{0}}(T_1,x).$$
By Theorem \ref{main}, we have
\begin{equation}\label{bmo-growth-2}
\left\| \log\Big|\dfrac{\partial}{\partial x}\tilde\phi_{t}(T_i,\cdot)\Big|\right\|_{\BMO(\mathbb R)}
\le \frac{\int_{T_i}^t 2C_6\left\|\dfrac{\partial}{\partial x}b(s,\cdot)\right\|_{\BMO(\mathbb R)} \,ds}{ \exp\left(-2C_7\int_{T_i}^t\left\|\dfrac{\partial}{\partial x}b(s,\cdot)\right\|_{\BMO(\mathbb R)}\,ds\right)}
\le 2^{-1}\epsilon_0\ \ \forall\ \ t\in(T_i,T_{i+1}].
\end{equation}
A combination of \eqref{bmo-growth-2} and Lemma \ref{weight-bmo} derives
$$
\begin{cases}
\left|\dfrac{\partial}{\partial x}\tilde\phi_{t}(T_i,\cdot)\right|\in A_\infty(\mathbb R);\\
\left[\Big|\dfrac{\partial}{\partial x}\tilde\phi_{t}(T_i,\cdot)\Big|\right]_{ A_\infty(\mathbb R)}\le 1+C_4\left\| \log\Big|\dfrac{\partial}{\partial x}\tilde\phi_{t}(T_i,\cdot)\Big|\right\|_{\BMO(\mathbb R)}.
\end{cases}
$$
Then Lemma \ref{bmo-comp} implies
$$\left\|v\circ \phi_{T_i}(t,\cdot)\right\|_{\BMO(\mathbb R)}\le C_3\|v\|_{\BMO(\mathbb R)}\left(1+C_4\left\| \log\Big|\dfrac{\partial}{\partial x}\tilde\phi_{T_i}(t,\cdot)\Big|\right\|_{\BMO(\mathbb R)}\right)\ \ \forall\ \ v\in  \BMO(\R).$$
Upon repeating this argument for $i$ times more, we gain
\begin{align*}
\left\|u(t,\cdot)\right\|_{\BMO(\mathbb R)}
&=\left\|u_0\circ\phi_{T_i}(t,\cdot)\circ \cdots\circ \phi_{T_{0}}(T_1,\cdot)\right\|_{\BMO(\mathbb R)}\\
&\le C_3^{i+1}\|u_0\|_{\BMO(\mathbb R)} \frac{\prod_{j=1}^{i} \left(1+C_4\left\| \log\Big|\dfrac{\partial}{\partial x}\tilde\phi_{T_j}(T_{j-1},\cdot)\Big|\right\|_{\BMO(\mathbb R)}\right)}{
\left(1+C_4\left\| \log\Big|\dfrac{\partial}{\partial x}\tilde\phi_{T_i}(t,\cdot)\Big|\right\|_{\BMO(\mathbb R)}\right)^{-1}}\\
&\le  C_3^{i+1}\left(1+C_42^{-1}\epsilon_0\right)^{i+1}\|u_0\|_{\BMO(\mathbb R)} \\
&\le \|u_0\|_{\BMO(\mathbb R)} \exp\left(C\int_{0}^t\left\|\dfrac{\partial}{\partial x}b(s,\cdot)\right\|_{\BMO(\mathbb R)}\,ds\right),
\end{align*}
where in the last inequality we have used
$$i\delta_0<\int_{0}^t\left\|\dfrac{\partial}{\partial x}b(s,\cdot)\right\|_{\BMO(\mathbb R)}\,ds\le (i+1)\delta_0.$$
This, together with \eqref{bmo-solution}, gives the desired size estimate.

\medskip
\noindent{\bf Step 3 - uniqueness of solution}.  This follows easily as an application of the renormalized property
of solutions established by DiPerna-Lions \cite{DPL} and the well-posedness of solutions
in $L^\infty(0,T;L^\infty(\R))$ established in \cite{CJMO};
see the proof of \cite[Thoerem 1]{CJMO0} for instance.
\end{proof}

{\it Acknowledgement}.
This work may be treated as a continuation of R. Jiang's project joint with
A. Clop, J. Mateu and J. Orobitg at Department of Mathematics, Autonomous University of Barcelona and R. Jiang would like to thank the department for its warm hospitality. In addition, R. Jiang was supported in part by National Natural Science Foundation
of China (11671039 \& 11771043); K.W. Li was supported by Juan de la Cierva - Formaci\'on 2015 FJCI-2015-24547, by the Basque Government through the BERC
2018-2021 program and by Spanish Ministry of Economy and Competitiveness
MINECO through BCAM Severo Ochoa excellence accreditation SEV-2013-0323
and through project MTM2017-82160-C2-1-P funded by (AEI/FEDER, UE) and
acronym ``HAQMEC''; J. Xiao was supported by NSERC of Canada (\#20171864).

\

{\it  Conflict of Interest Statement}.
The authors declare that there is no conflict of interest regarding the publication of this paper.
\vspace{-0.3cm}

\noindent Renjin Jiang \\
\noindent  Center for Applied Mathematics, Tianjin University, Tianjin 300072, China\\
\noindent {rejiang@tju.edu.cn}

\

\noindent Kangwei Li\\
\noindent Basque Center for Applied Mathematics, Mazarredo, 14. 48009 Bilbao
Basque Country, Spain\\
\noindent {kli@bcamath.org}

\

\noindent  {Jie Xiao} \\
\noindent {Department of Mathematics and Statistics, Memorial University, St. John's, NL A1C 5S7, Canada}\\
\noindent {jxiao@mun.ca}

\end{document}